\theoremstyle{definition}
\newtheorem{thm}{Theorem}[section]
\newtheorem{lem}[thm]{Lemma}
\newtheorem{prop}[thm]{Proposition}
\newtheorem{defi}[thm]{Definition}
\numberwithin{equation}{section}
\newcommand{\Sc}{\operatorname{Sc}}
\renewcommand{\Im}{\operatorname{Im}}
\newcommand{\sgn}{\operatorname{sgn}}
\newcommand{\ran}{\operatorname{ran}}
\newcommand{\dom}{\operatorname{dom}}
\title[Quadratic estimates for the $H^\infty$-functional calculus]{Quadratic estimates for the $H^\infty$-functional calculus \\ of bisectorial Clifford operators}
\author[Fabrizio Colombo]{F. Colombo}
\address{(FC) Politecnico di Milano, Dipartimento di Matematica, Via E. Bonardi 9, 20133 Milano, Italy}
\email{fabrizio.colombo@polimi.it}
\author[Francesco Mantovani]{F. Mantovani}
\address{(FM) Politecnico di Milano, Dipartimento di Matematica, Via E. Bonardi 9, 20133 Milano, Italy}
\email{francesco.mantovani@polimi.it}
\author[P. Schlosser]{P. Schlosser}
\address{(PS) Graz University of Technology, Institute of Applied Mathematics, Steyrergasse 30, 8010 Graz, Austria}
\email{pschlosser@math.tugraz.at}
\begin{document}

\begin{abstract}
The $H^\infty$-functional calculus is a two-step procedure, introduced by A. McIntosh, that allows the definition of functions of sectorial operators in Banach spaces. It plays a crucial role in the spectral theory of differential operators, as well as in their applications to evolution equations and various other fields of science. An extension of the $H^\infty$-functional calculus also exists in the hypercomplex setting, where it is based on the notion of $S$-spectrum. Originally this was done for sectorial quaternionic operators, but then also generalized all the way to bisectorial fully Clifford operators. In the latter setting and in Hilbert spaces, this paper now characterizes the boundedness of the $H^\infty$-functional calculus through certain quadratic estimates. Due to substantial differences in the definitions of the $S$-spectrum and the $S$-resolvent operators, the proofs of quadratic estimates in this setting face additional challenges compared to the classical theory of complex operators.
\end{abstract}

\maketitle

AMS Classification 47A10, 47A60. \medskip

Keywords: $S$-spectrum, $S$-resolvent operator, Quadratic estimates, $H^\infty$-functional calculus. \medskip

\textbf{Acknowledgements:} Fabrizio Colombo is supported by MUR grant Dipartimento di Eccellenza 2023-2027. Peter Schlosser was funded by the Austrian Science Fund (FWF) under Grant No. J 4685-N and by the European Union--NextGenerationEU.

\section{Introduction}

The $H^\infty$-functional calculus for sectorial operators was introduced by A. McIntosh in \cite{McI1} and has been extensively studied in subsequent works such as \cite{MC10,MC97,MC06,MC98}. This calculus extends the holomorphic functional calculus, see \cite{RD,HP57}, to unbounded operators. It plays a significant role in the spectral theory of differential operators and their applications. Precisely, this calculus is a crucial tool in the theory of maximal regularity for parabolic evolution equations, in resolving Kato's square root problem and in its applications to partial differential equations, including systems of elliptic partial differential operators and Schrödinger operators. For further details see the books \cite{Haase,HYTONBOOK1,HYTONBOOK2}. \medskip

In general, the $H^\infty$-functional calculus defines unbounded operators. However in \cite{CDMY} the authors M. Cowling, J. Doust, A. McIntosh, and A. Yagi employed ideas from harmonic analysis to characterize its boundedness for sectorial operators via suitable quadratic estimates. \medskip

In a Banach module $V$ over the Clifford algebra $\mathbb{R}_n$, the spectral theory for Clifford operators $T:\dom(T)\subseteq V\rightarrow V$ differs from the classical complex theory due to the noncommutativity of the spectral parameter $s$, which is a paravector, and the operator $T$. In this framework, the left and the right $S$-resolvent operators
\begin{equation}\label{Eq_S_resolvent_operators}
S_L^{-1}(s,T):=Q_s[T]^{-1}\overline{s}-TQ_s[T]^{-1}\qquad\text{and}\qquad S_R^{-1}(s,T):=(\overline{s}-T)Q_s[T]^{-1},
\end{equation}
involve the inverse of the second order operator
\begin{equation}\label{Eq_Qs}
Q_s[T]:=T^2-2s_0T+|s|^2,\qquad\text{with }\dom(Q_s[T])=\dom(T^2).
\end{equation}
Hence the corresponding spectrum (called the $S$-spectrum) was introduced in \cite{ColSab2006,ColomboSabadiniStruppa2011}, see also the books \cite{FJBOOK,CGK}, and is based on the invertibility of this exact operator, namely
\begin{equation}\label{Eq_S_spectrum}
\rho_S(T):=\big\{s\in\mathbb{R}^{n+1}\;\big|\;Q_s[T]^{-1}\in\mathcal{B}(V)\big\}\qquad\text{and}\qquad\sigma_S(T):=\mathbb{R}^{d+1}\setminus\rho_S(T),
\end{equation}
where $\mathcal{B}(V)$ denotes the set of bounded, everywhere defined operators. \medskip

Quaternionic and Clifford operators are fundamental in various areas of mathematics and physics. For example, they appear in quaternionic quantum mechanics, see \cite{adler,BF}, as well as in vector analysis, differential geometry, and hypercomplex analysis. We also point out that the spectral theorem based on the $S$-spectrum holds also for quaternionic and Clifford operators, see \cite{ACK,ColKim}. For further details, see the concluding remarks at the end of this paper. \medskip

Due to the noncommutative nature of the Clifford numbers, there are two $S$-resolvent operators \eqref{Eq_S_resolvent_operators} needed to define the $S$-functional calculus, that is the analogue
of the Riesz-Dunford functional calculus, in this setting. Specifically, for left slice hyperholomorphic functions, we utilize the left $S$-resolvent operator, while for right slice hyperholomorphic functions, the right $S$-resolvent operator is employed, to define
\begin{equation}\label{Eq_S_functional_calculus}
f(T):=\frac{1}{2\pi}\int_{\partial U\cap\mathbb{C}_J}S_L^{-1}(s,T)ds_Jf(s)\quad\text{and}\quad f(T):=\frac{1}{2\pi}\int_{\partial U\cap\mathbb{C}_J}f(s)ds_JS_R^{-1}(s,T),
\end{equation}
where $U$ is a suitable open set that contains the $S$-spectrum and $J$ is one of the imaginary units of $\mathbb{R}_n$. \medskip

The generalization of \eqref{Eq_S_functional_calculus} to an $H^\infty$-functional calculus for at least left slice
hyperholomorphic functions was introduced in 2016, see \cite{ACQS2016,CGdiffusion2018,FJBOOK}. It applies to sectorial or bisectorial right linear operators $T$ and to functions $f$ that may grow polynomially at zero and at infinity. The calculus is constructed by the product
\begin{equation}\label{Eq_Hinfty}
f(T):=e(T)^{-1}(ef)(T),
\end{equation}
of the inverse the regularizer $e(T)$, with the regularized operator $(ef)(T)$. Both expressions are understood as the $\omega$-functional calculus, a calculus for bisectorial operator similar to \eqref{Eq_S_functional_calculus}. This calculus is then well-defined since it can be shown that it is independently of the choice of the regularizer $e$. Detailed properties were examined in recent work \cite{MS24} inspired by the universality property of the $S$-functional calculus \cite{ADVCGKS}. \medskip

A persistent challenge has been extending this calculus to right slice hyperholomorphic functions. Due to the noncommutative multiplication of the Clifford numbers, this can not be done in a symmetric way. As noted in \cite[Remark 7.2.2]{FJBOOK}, this definition would depend on the choice of the regularizer function $e$. In the recent paper \cite{RIGHT}, we addressed this unresolved issue by demonstrating that right linear operators can also admit an $H^\infty$-functional calculus for right slice hyperholomorphic functions, utilizing the framework of multivalued operators (also known as right linear relations). \medskip

The aim of this paper is now to investigate the boundedness of the $H^\infty$-functional calculus \eqref{Eq_Hinfty}. In particular, we find certain quadratic estimates which characterize the boundedness of the $H^\infty$-functional calculus in the sense that for every bounded function $f$, also $f(T)$ is bounded. The precise statement of this characterization, see also Theorem~\ref{thm_Quadratic_estimates}, is as follows: Let $T:D(T)\subseteq V\rightarrow V$ be closed, injective and bisectorial of angle $\omega\in(0,\frac{\pi}{2})$. For some $\theta\in(\omega,\frac{\pi}{2})$ let furthermore $\mathcal{N}^0(D_\theta)$ and $N^\infty(D_\theta)$ be the function spaces from Definition~\ref{defi_N0} and Definition~\ref{defi_Ninfty}. Then the following statements are equivalent: \medskip

\begin{enumerate}
\item[i)] There exists a constant $c\geq 0$, such that $f(T)\in\mathcal{B}(V)$ and
\begin{equation*}
\Vert f(T)\Vert\leq c\Vert f\Vert_\infty,\qquad f\in\mathcal{N}^\infty(D_\theta);
\end{equation*}

\item[ii)] For some/each $0 \ne f\in\mathcal{N}^0(D_\theta)$, there exist $c_f,d_f>0$, such that for every $v\in V$
\begin{equation*}
c_f\Vert v\Vert\leq\bigg(\int_{-\infty}^\infty\Vert f(tT)v\Vert^2\frac{dt}{|t|}\bigg)^{\frac{1}{2}}\leq d_f\Vert v\Vert.
\end{equation*}

\item[iii)] For some/each $0\neq f\in\mathcal{N}^0(D_\theta)$, there exist $d_f,\widetilde{d}_f\geq 0$, such that for every $v\in V$
\begin{equation*}
\bigg(\int_{-\infty}^\infty\Vert f(tT)v\Vert^2\frac{dt}{|t|}\bigg)^{\frac{1}{2}}\leq d_f\Vert v\Vert\qquad\text{and}\qquad\bigg(\int_{-\infty}^\infty\Vert f(tT^*)v\Vert^2\frac{dt}{|t|}\bigg)^{\frac{1}{2}}\leq\widetilde{d}_f\Vert v\Vert.
\end{equation*}
\end{enumerate}

\medskip The plan of the paper: Section~\ref{sec_Preliminaries} contains preliminaries on Clifford algebras and Clifford modules. In Section~\ref{sec_Bisectorial_operators}, we survey the basic facts on bisectorial operators and the $S$-functional calculus. In Section~\ref{sec_Quadratic_estimates}, we will prove the main result, the characterization of bounded operators defined via the $H^\infty$-functional calculus in Theorem~\ref{thm_Quadratic_estimates}. Section~\ref{sec_Quadratic_estimates} puts our results into perspective with some concluding remarks on applications and research directions.

\section{Preliminaries on Clifford Algebras and Clifford modules}\label{sec_Preliminaries}

Clifford algebras are associative algebras that generalize complex numbers and quaternions. They provide a mathematical framework for studying operator theory in a noncommutative setting. There are various approaches to extending spectral theory from complex operators to vector operators. The setting we focus on is that of slice hyperholomorphicity, which offers the advantage that $n$-tuples, or more generally $2^n$-tuples, of noncommuting operators can be embedded into a Clifford algebra. With this notion of holomorphicity in the Clifford setting, the concept of spectrum for Clifford operators becomes both very clear and explicit. \medskip

We now fix the algebraic and functional analytic setting in which we work. The underlying algebra we will consider is the real \textit{Clifford algebra} $\mathbb{R}_n$ over $n$ \textit{imaginary units} $e_1,\dots,e_n$, which satisfy the relations
\begin{equation*}
e_i^2=-1\qquad\text{and}\qquad e_ie_j=-e_je_i,\qquad i\neq j\in\{1,\dots,n\}.
\end{equation*}
More precisely, $\mathbb{R}_n$ is given by
\begin{equation*}
\mathbb{R}_n:=\Big\{\sum\nolimits_{A\in\mathcal{A}}s_Ae_A\;\Big|\;s_A\in\mathbb{R},\,A\in\mathcal{A}\Big\},
\end{equation*}
using the index set
\begin{equation*}
\mathcal{A}:=\big\{(i_1,\dots,i_r)\;\big|\;r\in\{0,\dots,n\},\,1\leq i_1<\dots<i_r\leq n\big\},
\end{equation*}
and the \textit{basis vectors} $e_A:=e_{i_1}\dots e_{i_r}$. Note that for $A=\emptyset$ the empty product of imaginary units is the real number $e_\emptyset=1$. Furthermore, we define for every Clifford number $s\in\mathbb{R}_n$ its \textit{conjugate} and its \textit{absolute value}
\begin{equation}\label{Eq_Conjugate_Norm}
\overline{s}:=\sum\nolimits_{A\in\mathcal{A}}(-1)^{\frac{|A|(|A|+1)}{2}}s_Ae_A\qquad\text{and}\qquad|s|^2:=\sum\nolimits_{A\in\mathcal{A}}s_A^2.
\end{equation}
An important subset of the Clifford numbers are the so called \textit{paravectors}
\begin{equation*}
\mathbb{R}^{n+1}:=\big\{s_0+s_1e_1+\dots+s_ne_n\;\big|\;s_0,s_1,\dots,s_n\in\mathbb{R}\big\}.
\end{equation*}
For any paravector $s\in\mathbb{R}^{n+1}$, we define the \textit{imaginary part}
\begin{equation*}
\Im(s):=s_1e_1+\dots+s_ne_n,
\end{equation*}
and the conjugate and the modulus in \eqref{Eq_Conjugate_Norm} reduce to
\begin{equation*}
\overline{s}=s_0-s_1e_1-\dots-s_ne_n\qquad\text{and}\qquad|s|^2=s_0^2+s_1^2+\dots+s_n^2.
\end{equation*}
The sphere of purely imaginary paravectors with modulus $1$, is defined by
\begin{equation*}
\mathbb{S}:=\big\{J\in\mathbb{R}^{n+1}\;\big|\;J_0=0,\,|J|=1\big\}.
\end{equation*}
Any element $J\in\mathbb{S}$ satisfies $J^2=-1$ and hence the corresponding hyperplane
\begin{equation*}
\mathbb{C}_J:=\big\{x+Jy\;\big|\;x,y\in\mathbb{R}\big\}
\end{equation*}
is an isomorphic copy of the complex numbers. Moreover, for every paravector $s\in\mathbb{R}^{n+1}$ we consider the corresponding \textit{$(n-1)$-sphere}
\begin{equation*}
[s]:=\big\{s_0+J|\Im(s)|\;\big|\;J\in\mathbb{S}\big\}.
\end{equation*}
A subset $U\subseteq\mathbb{R}^{n+1}$ is called \textit{axially symmetric}, if $[s]\subseteq U$ for every $s\in U$. \medskip

Next, we introduce the notion of slice hyperholomorphic functions $f:U\rightarrow\mathbb{R}_n$, defined on an axially symmetric open set $U\subseteq\mathbb{R}^{n+1}$.

\begin{defi}[Slice hyperholomorphic functions]
Let $U\subseteq\mathbb{R}^{n+1}$ be open, axially symmetric and consider
\begin{equation*}
\mathcal{U}:=\big\{(x,y)\in\mathbb{R}^2\;\big|\;x+\mathbb{S}y\subseteq U\big\}.
\end{equation*}
A function $f:U\rightarrow\mathbb{R}_n$ is called \textit{left} (resp. \textit{right}) \textit{slice hyperholomorphic}, if there exist continuously differentiable functions $f_0,f_1:\mathcal{U}\rightarrow\mathbb{R}_n$, such that for every $(x,y)\in\mathcal{U}$:

\begin{enumerate}
\item[i)] The function $f$ admits for every $J\in\mathbb{S}$ the representation
\begin{equation*}
f(x+Jy)=f_0(x,y)+Jf_1(x,y),\quad\Big(\text{resp.}\;f(x+Jy)=f_0(x,y)+f_1(x,y)J\Big).
\end{equation*}

\item[ii)] The functions $f_0,f_1$ satisfy the \textit{compatibility conditions}
\begin{equation*}
f_0(x,-y)=f_0(x,y)\qquad\text{and}\qquad f_1(x,-y)=-f_1(x,y).
\end{equation*}

\item[iii)] The functions $f_0,f_1$ satisfy the \textit{Cauchy-Riemann equations}
\begin{equation*}
\frac{\partial}{\partial x}f_0(x,y)=\frac{\partial}{\partial y}f_1(x,y)\qquad\text{and}\qquad\frac{\partial}{\partial y}f_0(x,y)=-\frac{\partial}{\partial x}f_1(x,y).
\end{equation*}
\end{enumerate}

The class of left (resp. right) slice hyperholomorphic functions on $U$ is denoted by $\mathcal{SH}_L(U)$ (resp. $\mathcal{SH}_R(U)$). In the special case that $f_0$ and $f_1$ are real valued, we call $f$ \textit{intrinsic} and the space of intrinsic functions is denoted by $\mathcal{N}(U)$.
\end{defi}

Next, we turn our attention to Clifford modules over $\mathbb{R}_n$. For a real Hilbert space $V_\mathbb{R}$ with inner product $\langle\cdot,\cdot\rangle_\mathbb{R}$ and corresponding norm $\Vert\cdot\Vert_\mathbb{R}^2=\langle\cdot,\cdot\rangle_\mathbb{R}$, we define the corresponding \textit{Clifford module}
\begin{equation*}
V:=\Big\{\sum\nolimits_{A\in\mathcal{A}}v_A\otimes e_A\;\Big|\;v_A\in V_\mathbb{R},\,A\in\mathcal{A}\Big\},
\end{equation*}
and equip it with the \textit{inner product}
\begin{equation}\label{Eq_Inner_product}
\langle v,w\rangle:=\sum\nolimits_{A,B\in\mathcal{A}}\langle v_A,w_B\rangle_\mathbb{R}\,\overline{e_A}\,e_B,\qquad v,w\in V,
\end{equation}
and the generated \textit{norm}
\begin{equation*}
\Vert v\Vert^2:=\Sc\langle v,v\rangle=\sum\nolimits_{A\in\mathcal{A}}\Vert v_A\Vert_\mathbb{R}^2,\qquad v\in V.
\end{equation*}
For any vector $v=\sum_{A\in\mathcal{A}}v_A\otimes e_A\in V$ and any Clifford number $s=\sum_{B\in\mathcal{A}}s_Be_B\in\mathbb{R}_n$, we establish the left and the right scalar multiplication
\begin{align*}
sv:=&\sum\nolimits_{A,B\in\mathcal{A}}(s_Bv_A)\otimes(e_Be_A),\qquad\textit{(left-multiplication)} \\
vs:=&\sum\nolimits_{A,B\in\mathcal{A}}(v_As_B)\otimes(e_Ae_B).\qquad\textit{(right-multiplication)}
\end{align*}
From \cite[Lemma 2.1]{CMS24} we recall the well known properties of these products
\begin{align*}
\Vert sv\Vert&\leq 2^{\frac{n}{2}}|s|\Vert v\Vert\qquad\text{and}\qquad\Vert vs\Vert\leq 2^{\frac{n}{2}}|s|\Vert v\Vert,\qquad\text{if }s\in\mathbb{R}_n, \\
\Vert sv\Vert&=\Vert vs\Vert=|s|\Vert v\Vert,\hspace{4.83cm}\text{if }s\in\mathbb{R}^{n+1}.
\end{align*}
The sesquilinear form \eqref{Eq_Inner_product} is now also right-linear in the second, and right-antilinear in the first argument, i.e. for every $u,v,w\in V$, $s\in\mathbb{R}_n$, there holds
\begin{align*}
\langle u,v+w\rangle&=\langle u,v\rangle+\langle u,w\rangle, && \langle v,ws\rangle=\langle v,w\rangle s, \\
\langle v+w,u\rangle&=\langle v,u\rangle+\langle w,u\rangle, && \langle vs,w\rangle=\overline{s}\langle v,w\rangle.
\end{align*}
Moreover, there also holds
\begin{equation*}
\langle v,sw\rangle=\langle\overline{s}v,w\rangle.
\end{equation*}

\section{Bisectorial operators and the $H^\infty$-functional calculus}\label{sec_Bisectorial_operators}

For any Hilbert module $V$ we will denote the set of \textit{bounded, everywhere defined, right-linear operators}
\begin{equation*}
\mathcal{B}(V):=\big\{T:V\rightarrow V\text{ right-linear}\;\big|\;\dom(T)=V,\,T\text{ is bounded}\big\},
\end{equation*}
as well as the space of \textit{closed operators}
\begin{equation*}
\mathcal{K}(V):=\big\{T:V\rightarrow V\text{ right-linear}\;\big|\;\dom(T)\subseteq V\text{ is right linear},\,T\text{ is closed}\big\}.
\end{equation*}
In order to introduce bisectorial operators we consider for every $\omega\in(0,\frac{\pi}{2})$ the \textit{double sector} \medskip

\begin{minipage}{0.34\textwidth}
\begin{center}
\begin{tikzpicture}
\fill[black!30] (0,0)--(1.63,0.76) arc (25:-25:1.8)--(0,0)--(-1.63,-0.76) arc (205:155:1.8);
\draw (-1.63,0.76)--(1.63,-0.76);
\draw (-1.63,-0.76)--(1.63,0.76);
\draw[->] (-2.1,0)--(2.1,0);
\draw[->] (0,-0.7)--(0,0.7);
\draw (1,0) arc (0:25:1) (0.8,-0.07) node[anchor=south] {\small{$\omega$}};
\draw (-1.3,0) node[anchor=south] {\small{$D_\omega$}};
\end{tikzpicture}
\end{center}
\end{minipage}
\begin{minipage}{0.65\textwidth}
\begin{equation}\label{Eq_Domega}
D_\omega:=\big\{re^{J\phi}\;\big|\;r>0,\,J\in\mathbb{S},\,\phi\in I_\omega\big\},
\end{equation}
using the union of intervals
\begin{equation*}
I_\omega:=(-\omega,\omega)\cup(\pi-\omega,\pi+\omega).
\end{equation*}
\end{minipage}

\begin{defi}[Bisectorial operators]
An operator $T\in\mathcal{K}(V)$ is called \textit{bisectorial of angle} $\omega\in(0,\frac{\pi}{2})$, if its $S$-spectrum \eqref{Eq_S_spectrum} is contained in the closed double sector
\begin{equation*}
\sigma_S(T)\subseteq\overline{D_\omega},
\end{equation*}
and for every $\varphi\in(\omega,\frac{\pi}{2})$ there is
\begin{equation}\label{Eq_SL_estimate}
C_\varphi:=\sup\limits_{s\in\mathbb{R}^{n+1}\setminus(D_\varphi\cup\{0\})}|s|\Vert S_L^{-1}(s,T)\Vert<\infty.
\end{equation}
\end{defi}

Next, we define the class of functions for which the following $\omega$-functional calculus is defined.

\begin{defi}\label{defi_N0}
For every $\theta\in(0,\pi)$ let $D_\theta$ be the double sector from \eqref{Eq_Domega}. Then we define the function space
\begin{equation}\label{Eq_N0}
\mathcal{N}^0(D_\theta):=\bigg\{f\in\mathcal{N}(D_\theta)\;\bigg|\;\exists\alpha>0,\,C_\alpha\geq 0: |f(s)|\leq\frac{C_\alpha|s|^\alpha}{1+|s|^{2\alpha}}\bigg\}.
\end{equation}
\end{defi}

Note that this class is a little smaller than the one in \cite[Definition 3.1]{CMS24}. However, in view of the characterization in Theorem~\ref{thm_Quadratic_estimates}, it is enough to consider these classes of functions. In accordance to \cite[Definition 3.5]{CMS24}, we define for those functions the so called \textit{$\omega$-functional calculus}.

\begin{defi}[$\omega$-functional calculus]
Let $T\in\mathcal{K}(V)$ be bisectorial of angle $\omega\in(0,\frac{\pi}{2})$. Then for every $f\in\mathcal{N}^0(D_\theta)$ we define the \textit{$\omega$-functional calculus}
\begin{equation}\label{Eq_omega_functional_calculus}
f(T):=\frac{1}{2\pi}\int_{\partial D_\varphi\cap\mathbb{C}_J}S_L^{-1}(s,T)ds_Jf(s),
\end{equation}
where $\varphi\in(\omega,\theta)$ and $J\in\mathbb{S}$ are arbitrary and the integral \eqref{Eq_omega_functional_calculus} is independent of that choice.
\end{defi}

Note, the the path integral \eqref{Eq_omega_functional_calculus} is understood in the sense
\begin{equation}\label{Eq_omega_parametrized}
f(T)=\frac{1}{2\pi}\sum\limits_\pm\int_{-\infty}^\infty S_L^{-1}(re^{\pm J\varphi},T)\frac{\mp\sgn(r)e^{\pm J\varphi}}{J}f(re^{\pm J\varphi})dr.
\end{equation}
According to \cite[Theorem~3.11]{CMS24}, this $\omega$-functional calculus now satisfies the important product rule
\begin{equation}\label{Eq_Product_rule}
(gf)(T)=g(T)f(T),\qquad g,f\in\mathcal{N}^0(D_\theta),
\end{equation}
which can now be used to enlarge the class of functions \eqref{Eq_N0} for which the functional calculus $f(T)$ is defined. Principally it is possible to allow left and right slice hyperholomorphic functions which are polynomially growing at zero and at infinity, see \cite[Proposition 5.2]{CMS24} and \cite[Definition 4.7]{RIGHT}. However, in this paper we will only concentrate on the following classes of bounded intrinsic functions.

\begin{defi}\label{defi_Ninfty}
For every $\theta\in(0,\frac{\pi}{2})$ let $D_\theta$ be the double sector from \eqref{Eq_Domega}. Then we define the function space of bounded, intrinsic functions
\begin{equation*}
\mathcal{N}^\infty(D_\theta):=\big\{f\in\mathcal{N}(D_\theta)\;\big|\;f\text{ is bounded}\big\}.
\end{equation*}
\end{defi}

In the fashion of \cite[Definition 5.3]{CMS24}, we can then define for those functions the $H^\infty$-functional calculus.

\begin{defi}[$H^\infty$-functional calculus]\label{defi_Hinfty_functional_calculus}
Let $T\in\mathcal{K}(V)$ be injective and bisectorial of angle $\omega\in(0,\frac{\pi}{2})$. Then for every $f\in\mathcal{N}^\infty(D_\theta)$, we define the \textit{$H^\infty$-functional calculus}
\begin{equation}\label{Eq_Hinfty_functional_calculus}
f(T):=e(T)^{-1}(ef)(T),
\end{equation}
where $e(s):=\frac{s}{1+s^2}\in\mathcal{N}^0(D_\theta)$ and the corresponding operator $e(T)$ via \eqref{Eq_omega_functional_calculus} is injective.
\end{defi}

Finally we recall the definition of the adjoint operator in the Clifford setting

\begin{defi}
For every $T\in\mathcal{K}(V)$ with $\overline{\dom}(T)=V$, we define the \textit{adjoint operator} $T^*\in\mathcal{K}(V)$ as the unique operator with the property
\begin{equation*}
\langle T^*w,v\rangle=\langle w,Tv\rangle,\qquad v\in\dom(T),w\in\dom(T^*),
\end{equation*}
and with the domain
\begin{equation*}
\dom(T^*):=\big\{w\in V\;\big|\;\exists v_w\in V: \langle v_w,v\rangle=\langle w,Tv\rangle,\,v\in\dom(T)\big\}.
\end{equation*}
\end{defi}

An important result, which we will need in the sequel, is regarding the $S$-spectrum and the $S$-functional calculus of the adjoint operator. For a proof see \cite{ADJOINT}.

\begin{thm}\label{thm_Adjoint_omega_functional_calculus}
Let $V$ be a Hilbert module and $T\in\mathcal{K}(V)$ be injective and bisectorial of angle $\omega\in(0,\frac{\pi}{2})$. Then also $T^*$ is injective and bisectorial of the same angle $\omega$. Moreover, for every $f\in\mathcal{N}^\infty(D_\theta)$, $\theta\in(\omega,\frac{\pi}{2})$, the $H^\infty$-functional calculus \eqref{Eq_Hinfty_functional_calculus} of $T^*$ connects to the one for $T$ in the sense
\begin{equation*}
f(T^*)=f(T)^*.
\end{equation*}
\end{thm}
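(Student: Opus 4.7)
The plan is to establish three ingredients in order: the bisectoriality of $T^*$, the identity $g(T^*)=g(T)^*$ for $g\in\mathcal{N}^0(D_\theta)$ via the integral representation, and then the extension to $f\in\mathcal{N}^\infty(D_\theta)$ through the regularizer. For the bisectoriality, I would first observe that for every $s\in\mathbb{R}^{n+1}$ the coefficients $s_0$ and $|s|^2$ of $Q_s[\cdot]$ are real, which yields the operator identity $Q_s[T^*]=Q_s[T]^*$. Hence $Q_s[T]$ is boundedly invertible if and only if $Q_s[T^*]$ is, so $\sigma_S(T^*)=\sigma_S(T)\subseteq\overline{D_\omega}$. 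Injectivity of $T^*$ then follows from the density of $\ran(T)$, a standard consequence of the bisectoriality of $T$ (approximate vectors via the Yosida-type family built from $Q_\varepsilon[T]^{-1}$); and the double-sector estimate for $T^*$ comes from an explicit duality relation linking $S_L^{-1}(s,T)^*$ to the $S$-resolvents of $T^*$, combined with the fact that $S_R^{-1}$ satisfies an estimate analogous to \eqref{Eq_SL_estimate}.

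For the second ingredient, I would take the adjoint of the parametrized integral \eqref{Eq_omega_parametrized}. Since $g$ is intrinsic, its restriction to $\mathbb{C}_J$ takes values in $\mathbb{C}_J$ and satisfies $\overline{g(s)}=g(\overline{s})$. Passing the adjoint under the integral converts $S_L^{-1}(s,T)$ into an $S$-resolvent expression for $T^*$ and conjugates the scalar factor $\mp\sgn(r)e^{\pm J\varphi}J^{-1}g(re^{\pm J\varphi})$; after the substitution $\pm\mapsto\mp$ (equivalently $s\mapsto\overline{s}$) one recognizes the result as the right-sided $\omega$-calculus representation of $g(T^*)$, which for intrinsic $g$ coincides with the left-sided definition used in \eqref{Eq_omega_functional_calculus} by standard arguments of \cite{CMS24}.

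The extension to the $H^\infty$-calculus then goes through the regularizer $e(s)=s/(1+s^2)$, which together with $ef$ is intrinsic and belongs to $\mathcal{N}^0(D_\theta)$. The previous step yields $e(T^*)=e(T)^*$ and $(ef)(T^*)=(ef)(T)^*$. Injectivity of $e(T)$ combined with density of $\ran(e(T))$ (again obtained from bisectoriality) forces $e(T^*)=e(T)^*$ to be injective, so that $f(T^*)$ is well defined by \eqref{Eq_Hinfty_functional_calculus}. Taking adjoints in $f(T)=e(T)^{-1}(ef)(T)$ and using the product rule \eqref{Eq_Product_rule} to commute $e(T^*)^{-1}$ with $(ef)(T^*)$ then delivers $f(T)^*=e(T^*)^{-1}(ef)(T^*)=f(T^*)$.

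The main obstacle is the bookkeeping in the adjoint of the $\omega$-calculus integrand. The noncommutativity of $\mathbb{R}_n$ means that the three Clifford factors $S_L^{-1}(s,T)$, $ds_J$ and $g(s)$ sitting on the right of a vector reverse their order under adjoint and each acquire a conjugation; matching the outcome, after the change of variable $s\mapsto\overline{s}$, with the $S$-resolvent formulation for $T^*$ is what forces the intrinsic hypothesis on $g$ in an essential way, and is the genuinely delicate step. Once this identification is in place, the remaining manipulations are purely algebraic.
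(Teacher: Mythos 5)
The paper does not actually prove this theorem: it states it as a recalled result and refers to the external reference \cite{ADJOINT} for a proof, so there is no in-paper argument to measure your attempt against. Your outline follows the route one would expect such a proof to take, and you correctly isolate the adjoint bookkeeping in the $\omega$-calculus integral as the technical heart of the argument.

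Two steps, however, are asserted rather than proved, and both are genuinely nontrivial in the unbounded setting. First, you write $Q_s[T^*]=Q_s[T]^*$ as if it were immediate from the reality of $s_0$ and $|s|^2$. The inclusion $Q_s[T^*]\subseteq Q_s[T]^*$ is clear, but the reverse requires $(T^2)^*=(T^*)^2$, which is \emph{not} automatic for a closed densely defined $T$; one has to use that $\rho_S(T)\neq\emptyset$ (for instance $s$ purely imaginary gives $-|s|^2\in\rho(T^2)$) together with the commutation of $T$ with $Q_s[T]^{-1}$ to pin down $\dom(Q_s[T]^*)$. Without this the claimed identification of $\sigma_S(T^*)$ is unjustified. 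Second, the key resolvent duality that your sketch needs is the operator identity $S_L^{-1}(s,T)^*=S_R^{-1}(\overline{s},T^*)$, from which the adjoint of \eqref{Eq_omega_parametrized} becomes the right-sided $\omega$-calculus representation of $g(T^*)$; you gesture at this and at the fact that for intrinsic $g$ the left and right representations coincide, but neither is carried out, and the second is not entirely trivial since it uses that $g$ maps $\mathbb{C}_J$ into $\mathbb{C}_J$ and that $g(\overline{s})=\overline{g(s)}$. As it stands the proposal is a plausible plan, not a complete proof; the passage from $\mathcal{N}^0(D_\theta)$ to $\mathcal{N}^\infty(D_\theta)$ via the regularizer is the one step that is both correct and fully argued.
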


\section{Quadratic estimates}\label{sec_Quadratic_estimates}

In the previous section we have now introduced the main definitions regarding the $H^\infty$-functional calculus and the spaces of slice hyperholomorphic functions that we will use. In this section, we will leverage these facts to prove the main results of this paper, which are the characterization of bounded operators defined via the $H^\infty$-functional calculus. \medskip

It is clear from the definition of the $\omega$-functional calculus \eqref{Eq_omega_functional_calculus}, that every $f\in\mathcal{N}^0(D_\theta)$, the operator $f(T)$ is bounded and everywhere defined. This section investigates the question, what happens to the boundedness of the operator $f(T)$, for bounded functions $f\in\mathcal{N}^\infty(D_\theta)$, defined via the unbounded $H^\infty$-functional calculus \eqref{Eq_Hinfty_functional_calculus}. 

\medskip
The central outcome of this section is Theorem~\ref{thm_Quadratic_estimates}, which characterizes the operators $T: \dom(T) \subset V \to V $ for which the functional calculus $f(T)$ is bounded. This theorem is established in the context where $V$ is a Hilbert module. To prove this theorem, we first need to establish several preparatory results.

\begin{lem}\label{lem_Convergence_lemma}
Let $T\in\mathcal{K}(V)$ be bisectorial of angle $\omega\in(0,\frac{\pi}{2})$, and $(f_n)_n\in\mathcal{N}^0(D_\theta)$, for some $\theta\in(\omega,\frac{\pi}{2})$, with
\begin{equation}\label{Eq_Convergence_lemma_boundedness}
M:=\sup\limits_{n\in\mathbb{N}}\Vert f_n\Vert_\infty<\infty\qquad\text{and}\qquad M':=\sup\limits_{n\in\mathbb{N}}\Vert f_n(T)\Vert<\infty.
\end{equation}
Moreover, we assume that there converges
\begin{equation}\label{Eq_Convergence_lemma_convergence}
\lim\limits_{n\rightarrow\infty}f_n=:f,\quad\text{uniformly on compact subsets of }D_\theta.
\end{equation}
Then there is $f\in\mathcal{N}^\infty(D_\theta)$, its $H^\infty$-functional calculus $f(T)\in\mathcal{B}(V)$ is a bounded operator, and there converges
\begin{equation*}
\lim\limits_{n\rightarrow\infty}f_n(T)v=f(T)v,\qquad v\in V.
\end{equation*}
\end{lem}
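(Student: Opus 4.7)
The plan is to deduce membership $f\in\mathcal{N}^\infty(D_\theta)$ from the uniform-on-compacts convergence, to establish strong convergence $f_n(T)v\to f(T)v$ on the dense subset $\ran(e(T))$ via the regularizer $e(s):=s/(1+s^2)$ and dominated convergence, and finally to propagate this to all of $V$ using the uniform operator bound $M'$.

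For the first step, on any slice $\mathbb{C}_J\cap D_\theta$ each intrinsic $f_n$ restricts to an ordinary holomorphic function with values in $\mathbb{C}_J$, and uniform convergence on compact sets preserves both holomorphy and the property of taking values in $\mathbb{C}_J$. The representation formula for slice hyperholomorphic functions on axially symmetric sets then shows that $f$ itself is intrinsic slice hyperholomorphic on $D_\theta$, and the pointwise bound $|f(s)|\leq M$, inherited from $\Vert f_n\Vert_\infty\leq M$, places $f$ in $\mathcal{N}^\infty(D_\theta)$.

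For the second step, on vectors $v=e(T)w\in\ran(e(T))$, the product rule \eqref{Eq_Product_rule} in $\mathcal{N}^0(D_\theta)$ gives $f_n(T)v=(f_ne)(T)w$; likewise, Definition~\ref{defi_Hinfty_functional_calculus} combined with $ef\in\mathcal{N}^0(D_\theta)$ and the commutation of $ef$ with $e$ (itself a product-rule statement inside $\mathcal{N}^0$) yields $f(T)v=(fe)(T)w$. The difference
\begin{equation*}
\bigl(f_n(T)-f(T)\bigr)v=\bigl((f_n-f)e\bigr)(T)w
\end{equation*}
is a single $\omega$-calculus contour integral whose integrand, in the parametrization \eqref{Eq_omega_parametrized}, converges pointwise to zero on $\partial D_\varphi\cap\mathbb{C}_J$ and is, uniformly in $n$, dominated by an integrable function: the bisectorial bound \eqref{Eq_SL_estimate} gives $\Vert S_L^{-1}(s,T)\Vert\leq C_\varphi/|s|$, while the definition of $\mathcal{N}^0$ provides a constant $C_e$ with $|e(s)|\leq C_e|s|/(1+|s|^2)$, so the integrand is controlled by a constant multiple of $\Vert w\Vert/(1+|s|^2)$. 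Dominated convergence then produces $f_n(T)v\to f(T)v$ on $\ran(e(T))$.

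To conclude, I would combine the standard density of $\ran(e(T))$ in $V$ for injective bisectorial $T$ on the Hilbert module (arising for instance from $\ker(e(T)^*)=\ker(e(T^*))=\{0\}$ via Theorem~\ref{thm_Adjoint_omega_functional_calculus}) with the uniform bound $\sup_n\Vert f_n(T)\Vert\leq M'$ in a $3\varepsilon$-argument to extend strong convergence and the operator bound $\Vert f(T)\Vert\leq M'$ to all of $V$, so that $f(T)\in\mathcal{B}(V)$. The main obstacle is the construction of the integrable majorant in the second step: because the contour $\partial D_\varphi\cap\mathbb{C}_J$ is unbounded and runs arbitrarily close to both the origin and infinity, the balance between the resolvent decay $1/|s|$ and the decay $|s|/(1+|s|^2)$ of the regularizer is essential, and it depends crucially on the choice $\varphi\in(\omega,\theta)$ keeping the contour strictly inside $D_\theta$ and away from $\sigma_S(T)$.
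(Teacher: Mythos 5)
Your proposal follows essentially the same route as the paper: first deduce $f\in\mathcal{N}^\infty(D_\theta)$ from the locally uniform convergence, then pass through the regularizer $e(s)=s/(1+s^2)$ to obtain strong convergence on $\ran(e(T))$, and finally extend by density and the uniform bound $M'$. Two variations are cosmetic: you invoke dominated convergence where the paper manually splits the contour into $(a,b)$ and its complement and takes a $\limsup$ (equivalent, and your phrasing is cleaner), and you derive density of $\ran(e(T))$ from $\ker(e(T)^*)=\ker(e(T^*))=\{0\}$ via Theorem~\ref{thm_Adjoint_omega_functional_calculus}, whereas the paper cites \cite[Theorem 3.5]{RIGHT} directly (both are valid, and both implicitly require $T$ injective, which is anyway needed for the $H^\infty$-calculus to make sense).

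There is, however, one genuine gap in the final step. The $3\varepsilon$-argument shows that $Av:=\lim_n f_n(T)v$ exists for every $v\in V$, that $\Vert A\Vert\leq M'$, and that $A$ agrees with $f(T)$ on $\ran(e(T))$. But $f(T)=e(T)^{-1}(ef)(T)$ is a priori only a closed operator whose domain is not known to be $V$, so one cannot simply assert that the bound "extends" to $f(T)$ itself. To identify $A$ with $f(T)$ and conclude $\dom(f(T))=V$, you must invoke the closedness of $f(T)$: for $v\in V$ take $v_m\in\ran(e(T))$ with $v_m\to v$; then $f(T)v_m=Av_m\to Av$, so closedness gives $v\in\dom(f(T))$ and $f(T)v=Av$, whence $f(T)=A\in\mathcal{B}(V)$ (the paper does this and then appeals to the closed graph theorem). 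Without this step, the claim $f(T)\in\mathcal{B}(V)$ is not justified by the density argument alone.
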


\begin{proof}
First of all, due to the convergence \eqref{Eq_Convergence_lemma_convergence} and the boundedness \eqref{Eq_Convergence_lemma_boundedness}, there clearly is
\begin{equation*}
|f(s)|=\lim\limits_{n\rightarrow\infty}|f_n(s)|\leq\lim\limits_{n\rightarrow\infty}\Vert f_n\Vert_\infty\leq M,\qquad s\in D_\theta.
\end{equation*}
Since the convergence \eqref{Eq_Convergence_lemma_convergence} is uniform on compact subsets, the limit function is also holomorphic, i.e. we have proven $f\in\mathcal{N}^\infty(D_\theta)$. Let us now consider the regularizer function $e(s)=\frac{s}{1+s^2}$, from Definition~\ref{defi_Hinfty_functional_calculus}. It can now easily be checked that
\begin{equation}\label{Eq_Convergence_lemma_2}
|e(s)|=\frac{|s|}{|1+s^2|}\leq\frac{1}{\cos(\theta)}\frac{|s|}{1+|s|^2},\qquad s\in D_\theta.
\end{equation}
Since $f,(f_n)_n$ are bounded, there is then clearly $ef,(ef_n)_n\in\mathcal{N}^0(D_\theta)$. Consider now some arbitrary $0<a\leq b<\infty$ and $\varphi\in(\omega,\theta)$. Then with the help of the parametrized integral \eqref{Eq_omega_parametrized} and the bound \eqref{Eq_SL_estimate} of the resolvent operator, we can estimate the difference
\begin{align*}
\Vert(ef)&(T)-(ef_n)(T)\Vert \\
&=\frac{1}{2\pi}\bigg\Vert\sum\limits_\pm\int_{-\infty}^\infty S_L^{-1}(re^{\pm J\varphi},T)\frac{\mp\sgn(r)e^{\pm J\varphi}}{J}\big((ef)(re^{\pm J\varphi})-(ef_n)(re^{\pm J\varphi})\big)dr\bigg\Vert \\
&\leq\frac{C_\varphi}{2\pi}\sum_\pm\int_{-\infty}^\infty\big|(ef)(re^{\pm J\varphi})-(ef_n)(re^{\pm J\varphi})\big|\frac{dr}{|r|} \\
&\leq\frac{C_\varphi}{2\pi\cos(\theta)}\sum_\pm\int_{-\infty}^\infty\frac{\big|f(re^{\pm J\varphi})-f_n(re^{\pm J\varphi})\big|}{1+r^2}dr \\
&\leq\frac{2C_\varphi}{\pi\cos(\theta)}\bigg(\int_{(0,a)\cup(b,\infty)}\frac{2M}{1+r^2}dr+\sup\limits_{s\in\overline{D_\varphi},a\leq|s|\leq b}|f(s)-f_n(s)|\int_a^b\frac{1}{1+r^2}dr\bigg).
\end{align*}
On the right hand side we can now use the locally uniform convergence \eqref{Eq_Convergence_lemma_convergence}, to obtain
\begin{equation*}
\limsup\limits_{n\rightarrow\infty}\Vert(ef)(T)-(ef_n)(T)\Vert\leq\frac{4MC_\varphi}{\pi\cos(\theta)}\int_{(0,a)\cup(b,\infty)}\frac{1}{1+r^2}dr.
\end{equation*}
Since moreover $0<a\leq b<\infty$ were arbitrary and the left hand side is independent of $a$ and $b$, we can send $a\rightarrow 0^+$ and $b\rightarrow\infty$, to get
\begin{equation}\label{Eq_Convergence_lemma_3}
\limsup\limits_{n\rightarrow\infty}\Vert(ef)(T)-(ef_n)(T)\Vert\leq\frac{4MC_\varphi}{\pi\cos(\theta)}\lim\limits_{\substack{a\rightarrow 0^+ \\ b\rightarrow\infty}}\int_{(0,a)\cup(b,\infty)}\frac{1}{1+r^2}dr=0.
\end{equation}
By the product rule \eqref{Eq_Product_rule} of the $\omega$-functional calculus, we have
\begin{equation*}
(ef_n)(T)=f_n(T)e(T).
\end{equation*}
However, also for the function $f\in\mathcal{N}^\infty(D_\theta)$, we can use the commutation between $e(T)$ and $(ef)(T)$ from \cite[Corollary~3.18~iii)]{CMS24}, as well as the definition \eqref{Eq_Hinfty_functional_calculus} of the $H^\infty$-functional calculus, to also obtain
\begin{equation*}
(ef)(T)=e(T)^{-1}e(T)(ef)(T)=e(T)^{-1}(ef)(T)e(T)=f(T)e(T).
\end{equation*}
Using these two product rules, the convergence \eqref{Eq_Convergence_lemma_3} becomes
\begin{equation}\label{Eq_Convergence_lemma_4}
\lim\limits_{n\rightarrow\infty}\Vert f(T)w-f_n(T)w\Vert=0,\qquad w\in\ran(e(T)).
\end{equation}
As a direct consequence of this convergence, we obtain from \eqref{Eq_Convergence_lemma_boundedness} the norm estimate
\begin{equation}\label{Eq_Convergence_lemma_5}
\Vert f(T)w\Vert=\lim\limits_{n\rightarrow\infty}\Vert f_n(T)w\Vert\leq\lim\limits_{n\rightarrow\infty}\Vert f_n(T)\Vert\Vert w\Vert\leq M'\Vert w\Vert,\qquad w\in\ran(e(T)).
\end{equation}
Let now $v\in V$. Since
\begin{equation*}
\ran(e(T))=\ran\big(T(1+T^2)^{-1}\big)=\dom(T)\cap\ran(T)
\end{equation*}
is dense in $V$ by \cite[Theorem 3.5]{RIGHT}, there exists a sequence $(v_m)_m\in\ran(e(T))$, which approximates $v=\lim_{m\rightarrow\infty}v_m$. Due to the boundedness \eqref{Eq_Convergence_lemma_5}, also $(f(T)v_m)_m$ is a Cauchy sequence, and hence convergent. Since $f(T)$ is closed, this implies $v\in\dom(f(T))$. Hence we have proven that $\dom(f(T))=V$, and by the closed graph theorem then $f(T)\in\mathcal{B}(V)$. \medskip

Finally, in order to extend the convergence \eqref{Eq_Convergence_lemma_4} to all $v\in V$, we once more use the approximating sequence $(v_m)_m\in\ran(e(T))$ from above. With this sequence, we can estimate
\begin{align*}
\Vert f_n(T)v-f(T)v\Vert&\leq\Vert f_n(T)\Vert\Vert v-v_m\Vert+\Vert f_n(T)v_m-f(T)v_m\Vert+\Vert f(T)\Vert\Vert v_m-v\Vert \\
&\leq(M'+\Vert f(T)\Vert)\Vert v-v_m\Vert+\Vert f_n(T)v_m-f(T)v_m\Vert.
\end{align*}
Since we know that there converges \eqref{Eq_Convergence_lemma_4}, in the limit $n\rightarrow\infty$ this inequality then reduces to
\begin{equation*}
\limsup\limits_{n\rightarrow\infty}\Vert f_n(T)v-f(T)v\Vert\leq(M'+\Vert f(T)\Vert)\Vert v-v_m\Vert.
\end{equation*}
Since $m\in\mathbb{N}$ was chosen arbitrary and the left hand side is independent of $m$, we can also send $m\rightarrow\infty$, and get
\begin{equation*}
\lim\limits_{n\rightarrow\infty}\Vert f_n(T)v-f(T)v\Vert=0,\qquad v\in V. \qedhere
\end{equation*}
\end{proof}

\begin{thm}\label{thm_fab_approximation}
Let $T\in\mathcal{K}(V)$ be bisectorial of angle $\omega\in(0,\frac{\pi}{2})$ and $f\in\mathcal{N}^0(D_\theta)$, $\theta\in(\omega,\frac{\pi}{2})$. Let us define for every $0<a\leq b<\infty$ the function
\begin{equation}\label{Eq_fab}
f_{a,b}(s):=\int_{(-b,-a)\cup(a,b)}f(ts)\frac{dt}{t},\qquad s\in D_\theta.
\end{equation}
Then there is $f_{a,b}\in\mathcal{N}^0(D_\theta)$, and its $\omega$-functional calculus \eqref{Eq_omega_functional_calculus} admits the representation
\begin{equation}\label{Eq_fab_representation}
f_{a,b}(T)=\int_{(-b,-a)\cup(a,b)}f(tT)\frac{dt}{t}.
\end{equation}
Moreover, with the constant $f_{0,\infty}:=\int_{-\infty}^\infty f(t)\frac{dt}{t}$, there converges
\begin{equation}\label{Eq_fab_convergence}
\lim\limits_{\substack{a\rightarrow 0^+ \\ b\rightarrow\infty}}f_{a,b}(T)v=f_{0,\infty}v,\qquad v\in V.
\end{equation}
\end{thm}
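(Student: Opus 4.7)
My plan for the proof addresses the three claims of the theorem in sequence.

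\textbf{Step 1.} The membership $f_{a,b}\in\mathcal{N}^0(D_\theta)$ is a direct verification. Intrinsic slice hyperholomorphicity of $f_{a,b}$ follows by exchanging the real-parameter integral with differentiation and with the intrinsic compatibility conditions, which is permissible since $f$ is continuously differentiable and the integration is over the bounded set $(-b,-a)\cup(a,b)$. For the decay bound I split according to whether $|ts|\leq 1$ or $|ts|\geq 1$ and apply $|f(ts)|\leq C_\alpha|ts|^\alpha$ in the first region and $|f(ts)|\leq C_\alpha|ts|^{-\alpha}$ in the second; this yields an estimate of the form $|f_{a,b}(s)|\leq K_{a,b}|s|^\alpha/(1+|s|^{2\alpha})$ with a finite constant $K_{a,b}$.

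\textbf{Step 2.} For the representation \eqref{Eq_fab_representation}, I substitute the defining integral of $f_{a,b}$ into the $\omega$-functional calculus formula \eqref{Eq_omega_functional_calculus} and apply Fubini's theorem. Absolute integrability over $(\partial D_\varphi\cap\mathbb{C}_J)\times((-b,-a)\cup(a,b))$ is ensured by $\|S_L^{-1}(s,T)\|\leq C_\varphi/|s|$ combined with the decay of $f$. After the swap, the inner integral is identified with $f(tT)$ via the scaling relation $Q_s[tT]=t^2Q_{s/t}[T]$, equivalently $S_L^{-1}(s,tT)=t^{-1}S_L^{-1}(s/t,T)$; the substitution $u=s/t$ in the inner integral (together with the invariance of $\partial D_\varphi$ under scaling by nonzero real $t$) then reproduces the $\omega$-functional calculus integral defining $f(tT)$.

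\textbf{Step 3.} The convergence \eqref{Eq_fab_convergence} is obtained via the Convergence Lemma~\ref{lem_Convergence_lemma} applied along an arbitrary sequence $a_n\to 0^+$, $b_n\to\infty$. Locally uniform convergence $f_{a_n,b_n}\to f_{0,\infty}$ on $D_\theta$ is established by writing $s=re^{J\phi}$ and substituting $\lambda=tr$: $f_{a,b}(s)$ becomes a truncated contour integral of $f(\zeta)/\zeta$ along the line through $0$ and $s$ in $\mathbb{C}_J$, which deforms to the real axis by the $\mathcal{N}^0$-decay of $f$ at $0$ and $\infty$ and therefore converges to the constant $f_{0,\infty}$. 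The same substitution produces the uniform bound $\|f_{a,b}\|_\infty\leq 2C_\alpha\int_0^\infty\lambda^{\alpha-1}/(1+\lambda^{2\alpha})\,d\lambda$. The main obstacle is the remaining hypothesis $\sup_n\|f_{a_n,b_n}(T)\|<\infty$, since the naive resolvent estimate diverges logarithmically. To circumvent this, I would first prove $f_{a,b}(T)v\to f_{0,\infty}v$ directly on the dense subspace $\ran(e(T))$: for $v=e(T)w$, the product rule gives $f_{a,b}(T)v=(f_{a,b}e)(T)w$, and the extra factor $|e(s)|\leq|s|/(\cos\theta(1+|s|^2))$ supplies an integrable majorant in the $\omega$-functional calculus integral, so dominated convergence yields $(f_{a,b}e)(T)w\to f_{0,\infty}e(T)w=f_{0,\infty}v$. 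The uniform operator bound needed to extend this to all of $V$ must then be derived by a more delicate estimate specific to the integral structure of $f_{a,b}$, after which the Convergence Lemma delivers the strong convergence on $V$.
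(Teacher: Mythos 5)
Steps 1 and 2 of your proposal are sound and match the paper's approach: exchanging the real-parameter integral with the slice-Cauchy-Riemann structure and deriving the $\mathcal{N}^0$ decay bound, then Fubini plus the scaling identity $t^{-1}S_L^{-1}(s/t,T)=S_L^{-1}(s,tT)$ for the representation \eqref{Eq_fab_representation}. Your identification of the locally uniform limit $f_{a,b}\to f_{0,\infty}$ via contour deformation (Cauchy's theorem along the ray through $s$, justified by the $\mathcal{N}^0$ decay) also matches the paper, as does the uniform $\Vert\cdot\Vert_\infty$ bound $C_\alpha\pi/\alpha$.

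The genuine gap is in Step 3. You correctly diagnose that the naive estimate of $\Vert f_{a,b}(T)\Vert$ through the $\omega$-functional calculus integral diverges like $\log(b/a)$ — this is precisely the obstruction — but you do not resolve it. Your proposed workaround (first proving convergence on the dense subspace $\ran(e(T))$ by dominated convergence, then extending to $V$) still requires $\sup_{a,b}\Vert f_{a,b}(T)\Vert<\infty$ at the extension step, and you explicitly defer this to ``a more delicate estimate specific to the integral structure of $f_{a,b}$'' without supplying it. That uniform bound is exactly the content of Lemma~\ref{lem_Convergence_lemma}'s second hypothesis, and establishing it is the heart of the proof. The paper's resolution is the decomposition \eqref{Eq_fab_approximation_7},
\begin{equation*}
f_{a,b}=\Bigl(\tfrac{f_{0,\infty}(as)^2}{1+(as)^2}-\int_{-a}^af(ts)\tfrac{dt}{t}\Bigr)+\Bigl(\int_{-b}^bf(ts)\tfrac{dt}{t}-\tfrac{f_{0,\infty}(bs)^2}{1+(bs)^2}\Bigr)+\tfrac{f_{0,\infty}(b^2-a^2)s^2}{(1+(as)^2)(1+(bs)^2)},
\end{equation*}
in which each piece admits an $a,b$-uniform operator bound: the first two pieces inherit $\mathcal{N}^0$-type bounds with constants independent of $a,b$ (after using the identity $f_{0,\infty}=\int_{-\infty}^\infty f(ts)\frac{dt}{t}$), and the third is handled explicitly via the polynomial functional calculus and the pseudo-resolvent bound. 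Without an argument of this kind, your Step 3 is a plan, not a proof.
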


\begin{proof}
Since the domain of integration $(-a,-b)\cup(a,b)$ is bounded, it is clear that the integral \eqref{Eq_fab} gives an intrinsic function $f_{a,b}$. In order to show that $f_{a,b}\in\mathcal{N}^0(D_\theta)$, we estimate
\begin{equation}\label{Eq_fab_approximation_1}
|f_{a,b}(s)|\leq\int_{(-b,-a)\cup(a,b)}|f(ts)|\frac{dt}{|t|}\leq 2C_\alpha\int_a^b\frac{|ts|^\alpha}{1+|ts|^{2\alpha}}\frac{dt}{t}.
\end{equation}
Using now the maximum value
\begin{equation}\label{Eq_fab_approximation_3}
\sup\limits_{t>0}\frac{t^\alpha|ts|^\alpha}{(1+t^{2\alpha})(1+|ts|^{2\alpha})}=\frac{|s|^\alpha}{(1+|s|^\alpha)^2}\leq\frac{|s|^\alpha}{1+|s|^{2\alpha}},
\end{equation}
which is attained at $t=|s|^{-\frac{1}{2}}$, we can further estimate \eqref{Eq_fab_approximation_1} by
\begin{equation*}
|f_{a,b}(s)|\leq\frac{2C_\alpha|s|^\alpha}{1+|s|^{2\alpha}}\int_a^b\frac{1+t^{2\alpha}}{t^\alpha}\frac{dt}{t},\qquad s\in D_\theta.
\end{equation*}
Since $0<a\leq b<\infty$, the integral is finite and we have shown that $f_{a,b}\in\mathcal{N}^0(D_\theta)$. The representation \eqref{Eq_fab_representation} then follows directly from the definition of the $\omega$-functional calculus
\begin{align}
f_{a,b}(T)&=\frac{1}{2\pi}\int_{\partial D_\varphi\cap\mathbb{C}_J}S_L^{-1}(s,T)ds_Jf_{a,b}(s) \notag \\
&=\frac{1}{2\pi}\int_{(-b,-a)\cup(a,b)}\int_{\partial D_\varphi\cap\mathbb{C}_J}S_L^{-1}(s,T)ds_Jf(ts)\frac{dt}{t} \notag \\
&=\frac{1}{2\pi}\int_{(-b,-a)\cup(a,b)}\int_{\partial D_\varphi\cap\mathbb{C}_J}S_L^{-1}(s,tT)ds_Jf(s)\frac{dt}{t}=\int_{(-b,-a)\cup(a,b)}f(tT)\frac{dt}{t}, \label{Eq_fab_approximation_4}
\end{align}
where in the third equation we substituted $s\to\frac{s}{t}$ and used the identity
\begin{equation}\label{Eq_fab_approximation_2}
\frac{1}{t}S_L^{-1}\Big(\frac{s}{t},T\Big)=S_L^{-1}(s,tT),\qquad s\in D_\theta,\,t\in\mathbb{R}\setminus\{0\}.
\end{equation}
For the convergence \eqref{Eq_fab_convergence}, we will verify the assumptions of Lemma~\ref{lem_Convergence_lemma} for the functions $f_{a,b}$. To do so, we continue estimating \eqref{Eq_fab_approximation_1} in a different way, namely as
\begin{equation*}
|f_{a,b}(s)|\leq 2C_\alpha\int_0^\infty\frac{|ts|^\alpha}{1+|ts|^{2\alpha}}\frac{dt}{t}=\frac{C_\alpha\pi}{\alpha},\qquad s\in D_\theta.
\end{equation*}
which shows that the first supremum in \eqref{Eq_Convergence_lemma_boundedness} is finite. The more challenging part is the finiteness of the second supremum in \eqref{Eq_Convergence_lemma_boundedness}. To prove this, let us first show that the constant $f_{0,\infty}:=\int_{-\infty}^\infty f(t)\frac{dt}{t}$ can be written as
\begin{equation}\label{Eq_fab_approximation_5}
f_{0,\infty}=\int_{-\infty}^\infty f(ts)\frac{dt}{t},\qquad s\in D_\theta.
\end{equation}
If we write $s=re^{J\varphi}$ for some $r>0$, $J\in\mathbb{S}$, $\varphi\in I_\theta$, we can substitute $t\rightarrow\frac{t}{r}$, and rewrite
\begin{equation*}
\int_{-\infty}^\infty f(ts)\frac{dt}{t}=\int_{-\infty}^\infty f(tre^{J\varphi})\frac{dt}{t}=\int_{-\infty}^\infty f(te^{J\varphi})\frac{dt}{t}=\lim\limits_{\substack{a\rightarrow 0^+ \\ b\rightarrow\infty}}\bigg(\int_{-b}^{-a}+\int_a^b\bigg)f(te^{J\varphi})\frac{dt}{t}.
\end{equation*}
Using now the Cauchy theorem, we can change the integration path according to \medskip

\begin{center}
\begin{tikzpicture}
\draw[->] (-3,0)--(3,0);
\draw[->] (0,-1.5)--(0,1.7) node[anchor=north west] {\large{$\mathbb{C}_J$}};
\draw (-2.6,-1.5)--(2.6,1.5);
\draw[ultra thick] (1,0) arc (0:30:1)--(2.16,1.25) arc (30:0:2.5)--(1,0);
\draw[ultra thick,->] (1,0)--(2,0);
\draw[ultra thick,->] (0.86,0.5)--(1.73,1);
\draw[ultra thick,->] (0.96,0.26)--(0.97,0.22);
\draw[ultra thick,->] (2.41,0.64)--(2.39,0.73);
\draw (1,0) node[anchor=north] {$a$};
\draw (2.5,0) node[anchor=north] {$b$};
\draw (0.75,0.5) node[anchor=south] {$ae^{J\varphi}$};
\draw (2,1.1) node[anchor=south] {$be^{J\varphi}$};
\draw[ultra thick] (-1,0) arc (180:210:1)--(-2.16,-1.25) arc (210:180:2.5)--(-1,0);
\draw[ultra thick,->] (-2.1,0)--(-1.6,0);
\draw[ultra thick,->] (-2.16,-1.25)--(-1.38,-0.8);
\draw[ultra thick,->] (-0.97,-0.22)--(-0.96,-0.26);
\draw[ultra thick,->] (-2.39,-0.73)--(-2.41,-0.64);
\draw (-1,0) node[anchor=south] {-$a$};
\draw (-2.5,0) node[anchor=south] {-$b$};
\draw (-0.75,-0.45) node[anchor=north] {-$ae^{J\varphi}$};
\draw (-1.9,-1.1) node[anchor=north] {-$be^{J\varphi}$};
\draw (0.65,-0.1) node[anchor=south] {$\varphi$};
\end{tikzpicture}
\end{center}
\begin{equation}\label{Eq_fab_approximation_6}
\int_{-\infty}^\infty f(ts)\frac{dt}{t}=\lim\limits_{\substack{a\rightarrow 0^+ \\ b\rightarrow\infty}}\bigg(\int_{-be^{J\varphi}}^{-b}+\int_{-b}^{-a}+\int_{-a}^{-ae^{J\varphi}}+\int_{ae^{J\varphi}}^a+\int_a^b+\int_b^{be^{J\varphi}}\bigg)\frac{f(p)}{p}dp.
\end{equation}
In the limit $b\rightarrow\infty$, the first integral in \eqref{Eq_fab_approximation_6} converges as
\begin{equation*}
\bigg|\int_{-be^{J\varphi}}^{-b}\frac{f(p)}{p}dp\bigg|=\bigg|\int_0^\varphi f(-be^{J\phi})d\phi\bigg|\leq\frac{C_\alpha b^\alpha|\varphi|}{1+b^{2\alpha}}\overset{b\rightarrow\infty}{\longrightarrow}0.
\end{equation*}
Analogously, there also vanish the third, fourth and sixth integral in \eqref{Eq_fab_approximation_6}, namely
\begin{equation*}
\lim\limits_{a\rightarrow 0^+}\int_{-a}^{-ae^{J\varphi}}\frac{f(p)}{p}dp=0,\qquad\lim\limits_{a\rightarrow 0^+}\int_{ae^{J\varphi}}^a\frac{f(p)}{p}dp=0,\qquad\lim\limits_{b\rightarrow\infty}\int_b^{be^{J\varphi}}\frac{f(p)}{p}dp=0.
\end{equation*}
These four vanishing limits, reduce the equation \eqref{Eq_fab_approximation_6} to the stated representation \eqref{Eq_fab_approximation_5}
\begin{equation*}
\int_{-\infty}^\infty f(ts)\frac{dt}{t}=\int_{-\infty}^\infty\frac{f(p)}{p}dp=f_{0,\infty}.
\end{equation*}
With respect to the constant $f_{0,\infty}$, we now split the function $f_{a,b}$ in \eqref{Eq_fab} into the three parts
\begin{equation}\label{Eq_fab_approximation_7}
f_{a,b}(s)=\underbrace{\frac{f_{0,\infty}(as)^2}{1+(as)^2}-\int_{-a}^af(ts)\frac{dt}{t}}_{=:f_{a,b}^{(1)}(s)}+\underbrace{\int_{-b}^bf(ts)\frac{dt}{t}-\frac{f_{0,\infty}(bs)^2}{1+(bs)^2}}_{=:f_{a,b}^{(2)}(s)}+\underbrace{\frac{f_{0,\infty}(b^2-a^2)s^2}{(1+(as)^2)(1+(bs)^2)}}_{=:f_{a,b}^{(3)}(s)}.
\end{equation}
Using the upper bounds \eqref{Eq_Convergence_lemma_2} and a similar estimate as in \eqref{Eq_fab_approximation_1}, we obtain
\begin{align}
|f_{a,b}^{(1)}(s)|&\leq\frac{|f_{0,\infty}||as|^2}{|1+(as)^2|}+2C_\alpha\int_0^a\frac{|ts|^\alpha}{1+|ts|^{2\alpha}}\frac{dt}{t} \notag \\
&\leq\frac{|f_{0,\infty}||as|^2}{\cos(\theta)(1+|as|^2)}+2C_\alpha\int_0^a|ts|^\alpha\frac{dt}{t} \notag \\
&=\frac{|f_{0,\infty}||as|^2}{\cos(\theta)(1+|as|^2)}+\frac{2C_\alpha|as|^\alpha}{\alpha}\leq\Big(\frac{|f_{0,\infty}|}{\cos(\theta)}+\frac{2C_\alpha}{\alpha}\Big)|as|^{\min\{\alpha,2\}},\qquad |s|\leq\frac{1}{a}. \label{Eq_fab_approximation_12}
\end{align}
Moreover, we can use the representation \eqref{Eq_fab_approximation_5} of $f_{0,\infty}$ to rewrite $f_{a,b}^{(1)}(s)$ as
\begin{equation*}
f_{a,b}^{(1)}(s)=\frac{f_{0,\infty}(as)^2}{1+(as)^2}-f_{0,\infty}+\int_{\mathbb{R}\setminus(-a,a)}f(ts)\frac{dt}{t}=-\frac{f_{0,\infty}}{1+(as)^2}+\int_{\mathbb{R}\setminus(-a,a)}f(ts)\frac{dt}{t}.
\end{equation*}
Consequently, we can analogously to \eqref{Eq_fab_approximation_12}, also estimate $f_{a,b}^{(1)}(s)$ by
\begin{align}
|f_{a,b}^{(1)}(s)|&\leq\frac{|f_{0,\infty}|}{|1+(as)^2|}
+2C_\alpha\int_a^\infty\frac{|ts|^\alpha}{1+|ts|^{2\alpha}}\frac{dt}{t} \notag \\
&\leq\frac{|f_{0,\infty}|}{\cos(\theta)(1+|as|^2)}
+2C_\alpha\int_a^\infty\frac{1}{|ts|^\alpha}\frac{dt}{|t|} \notag \\
&=\frac{|f_{0,\infty}|}{\cos(\theta)(1+|as|^2)}+\frac{2C_\alpha}{\alpha|as|^\alpha}\leq\Big(\frac{|f_{0,\infty}|}{\cos(\theta)}+\frac{2C_\alpha}{\alpha}\Big)\frac{1}{|as|^{\min\{\alpha,2\}}},\qquad|s|\geq\frac{1}{a}. \label{Eq_fab_approximation_13}
\end{align}
The two estimates \eqref{Eq_fab_approximation_12} and \eqref{Eq_fab_approximation_13} now on the one hand show that $f_{a,b}^{(1)}\in\mathcal{N}^0(S_\theta)$, and if we choose some arbitrary $\varphi\in(\omega,\theta)$ and $J\in\mathbb{S}$, it can also be used to estimate the operator norm of $f_{a,b}^{(1)}(T)$ by
\begin{align}
\Vert f_{a,b}^{(1)}(T)\Vert&=\frac{1}{2\pi}\bigg\Vert\sum\limits_\pm\int_{-\infty}^\infty S_L^{-1}(re^{\pm J\varphi},T)\frac{\mp\sgn(r)e^{\pm J\varphi}}{J}f_{a,b}^{(1)}(re^{\pm J\varphi})dr\bigg\Vert \notag \\
&\leq\frac{C_\varphi}{2\pi}\sum\limits_\pm\int_{-\infty}^\infty|f_{a,b}^{(1)}(re^{\pm J\varphi})|\frac{dr}{|r|} \notag \\
&\leq\frac{C_\varphi}{\pi}\Big(\frac{|f_{0,\infty}|}{\cos(\theta)}+\frac{2C_\alpha}{\alpha}\Big)\bigg(\int_{-\frac{1}{a}}^{\frac{1}{a}}|ar|^{\min\{\alpha,2\}}\frac{dr}{|r|}+\int_{\mathbb{R}\setminus(-\frac{1}{a},\frac{1}{a})}\frac{1}{|ar|^{\min\{\alpha,2\}}}\frac{dr}{|r|}\bigg) \notag \\
&=\frac{2C_\varphi}{\pi\min\{\alpha,2\}}\Big(\frac{|f_{0,\infty}|}{\cos(\theta)}+\frac{2C_\alpha}{\alpha}\Big). \label{Eq_fab_approximation_8}
\end{align}
Analogously, we show that $f_{a,b}^{(2)}\in\mathcal{N}^0(S_\theta)$, and that the operator norm of $f_{a,b}^{(2)}(T)$ is bound by
\begin{equation}\label{Eq_fab_approximation_9}
\Vert f_{a,b}^{(2)}(T)\Vert\leq\frac{2C_\varphi}{\pi\min\{\alpha,2\}}\Big(\frac{|f_{0,\infty}|}{\cos(\theta)}+\frac{2C_\alpha}{\alpha}\Big).
\end{equation}
Finally, for the function $f_{a,b}^{(3)}$ in \eqref{Eq_fab_approximation_7}, it is clear that $f_{a,b}^{(3)}\in\mathcal{N}^0(S_\theta)$. By the polynomial functional calculus \cite[Theorem 3.16]{CMS24}, we can explicitly write the operator $f_{a,b}^{(3)}(T)$ as
\begin{align*}
f_{a,b}^{(3)}(T)&=f_{0,\infty}(b^2-a^2)T^2(1+a^2T^2)^{-1}(1+b^2T^2)^{-1} \\
&=f_{0,\infty}\big((1+a^2T^2)^{-1}-(1+b^2T^2)^{-1}\big) \\
&=f_{0,\infty}\Big(\frac{1}{a^2}Q_{\frac{J}{a}}[T]^{-1}-\frac{1}{b^2}Q_{\frac{J}{b}}[T]^{-1}\Big),
\end{align*}
where in the last line we used the pseudo resolvent operator \eqref{Eq_Qs} and some arbitrary imaginary unit $J\in\mathbb{S}$. In this representation we can now use the norm bound \cite{RIGHT}, for example with the arbitrary angle $\varphi\in(\omega,\theta)$ from above, to estimate
\begin{equation}\label{Eq_fab_approximation_10}
\Vert f_{a,b}^{(3)}(T)\Vert\leq|f_{0,\infty}|\bigg(\frac{1}{a^2}\frac{2C_\varphi^2}{\frac{1}{a^2}}+\frac{1}{b^2}\frac{2C_\varphi^2}{\frac{1}{b^2}}\bigg)=4|f_{0,\infty}|C_\varphi^2.
\end{equation}
If we now combine the three estimates \eqref{Eq_fab_approximation_8}, \eqref{Eq_fab_approximation_9} and \eqref{Eq_fab_approximation_10}, we end up with the estimate of the full operator
\begin{equation*}
\Vert f_{a,b}(T)\Vert\leq\frac{4C_\varphi}{\pi\min\{\alpha,2\}}\Big(\frac{|f_{0,\infty}|}{\cos(\theta)}+\frac{2C_\alpha}{\alpha}\Big)+4|f_{0,\infty}|C_\varphi^2.
\end{equation*}
Since the right hand side is independent of $0<a\leq b<\infty$, we have proven the finiteness of the second supremum in \eqref{Eq_Convergence_lemma_boundedness}. \medskip

Finally, with the representation \eqref{Eq_fab_approximation_5} of the constant $f_{0,\infty}$, we can estimate
\begin{align*}
|f_{a,b}(s)-f_{0,\infty}|&=\bigg|\int_{-a}^af(ts)\frac{dt}{t}+\int_{\mathbb{R}\setminus(-b,b)}f(ts)\frac{dt}{t}\bigg| \\
&\leq 2C_\alpha\int_0^a\frac{|ts|^\alpha}{1+|ts|^{2\alpha}}\frac{dt}{t}+2C_\alpha\int_b^\infty\frac{|ts|^\alpha}{1+|ts|^{2\alpha}}\frac{dt}{t} \\
&=\frac{2C_\alpha}{\alpha}\Big(\arctan(|as|^\alpha)+\frac{\pi}{2}-\arctan(|bs|^\alpha)\Big),\qquad s\in D_\theta.
\end{align*}
Hence we conclude that there converges
\begin{equation*}
\lim\limits_{\substack{a\rightarrow 0^+ \\ b\rightarrow\infty}}f_{a,b}(s)=f_{0,\infty},\quad\text{uniformly on compact subsets of }D_\theta.
\end{equation*}
Hence, all the assumptions of Lemma~\ref{lem_Convergence_lemma} are satisfied and there follows the convergence
\begin{equation*}
\lim\limits_{\substack{a\rightarrow 0^+ \\ b\rightarrow\infty}}f_{a,b}(T)v=f_{0,\infty}(T)v=f_{0,\infty}v,\qquad v\in V,
\end{equation*}
where in the last equation we used that the $H^\infty$-functional calculus $f_{0,\infty}(T)$ of the constant function is the multiplication with the respective constant $f_{0,\infty}$, see \cite[Theorem 5.9]{CMS24}.
\end{proof}

\begin{lem}\label{lem_fg_estimates}
Let $T\in\mathcal{K}(V)$ be bisectorial of angle $\omega\in(0,\frac{\pi}{2})$ and $f,g\in\mathcal{N}^0(D_\theta)$, $\theta\in(\omega,\frac{\pi}{2})$. Then there holds the following estimates: \medskip

\begin{enumerate}
\item[i)] $\Vert f(tT)g(\tau T)\Vert\leq\frac{C_\theta C_\alpha}{\alpha}\Vert g\Vert_\infty$,\hspace{1.45cm} $t,\tau\in\mathbb{R}\setminus\{0\}$; \medskip

\item[ii)] $\int_{-\infty}^\infty\Vert f(tT)g(\tau T)\Vert\frac{dt}{|t|}\leq\frac{C_\theta C_\alpha C_\beta\pi}{2\alpha\beta}$,\qquad $\tau\in\mathbb{R}\setminus\{0\}$; \medskip

\item[iii)] For every function $\Psi\in L^2(\mathbb{R},\frac{dt}{|t|})$, there is
\begin{equation}\label{Eq_fg_estimate}
\int_{-\infty}^\infty\bigg(\int_{-\infty}^\infty\Vert f(tT)g(\tau T)\Vert|\Psi(t)|\frac{dt}{|t|}\bigg)^2\frac{d\tau}{|\tau|}\leq\Big(\frac{C_\theta C_\alpha C_\beta\pi}{2\alpha\beta}\Big)^2\int_{-\infty}^\infty|\Psi(t)|^2\frac{dt}{|t|}.
\end{equation}
\end{enumerate}
Here $C_\theta\geq 0$ is the constant from \eqref{Eq_SL_estimate}, and $\alpha>0$, $C_\alpha\geq 0$ (resp. $\beta>0$, $C_\beta\geq 0$) are the constants from Definition~\ref{defi_N0} of the function $f$ (resp. $g$).
\end{lem}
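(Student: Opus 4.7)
The plan is to reduce all three parts to direct estimates on the parametrized integral \eqref{Eq_omega_parametrized} of the $\omega$-functional calculus. The key preliminary observation is the scaling law $f(tT) = f_t(T)$ for the intrinsic function $f_t(s) := f(ts)$; this follows by applying the resolvent identity \eqref{Eq_fab_approximation_2} and substituting $r \mapsto t\rho$ inside \eqref{Eq_omega_parametrized}. One verifies routinely that $f_t, g_\tau \in \mathcal{N}^0(D_\theta)$, so the product rule \eqref{Eq_Product_rule} delivers
\[
f(tT)\, g(\tau T) = (f_t g_\tau)(T), \qquad t, \tau \in \mathbb{R}\setminus\{0\},
\]
and all three estimates therefore reduce to a single $\omega$-functional calculus integral. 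Throughout the proof I fix some $\varphi \in (\omega, \theta)$ and an imaginary unit $J \in \mathbb{S}$ for parametrizing the contour $\partial D_\varphi \cap \mathbb{C}_J$.

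For i), I would insert $f_t g_\tau$ into \eqref{Eq_omega_parametrized}, move the norm inside the sum and integral, and apply the resolvent estimate \eqref{Eq_SL_estimate} together with the trivial bound $|g(\tau r e^{\pm J\varphi})| \leq \|g\|_\infty$. The remaining one-dimensional integral $\int_{-\infty}^\infty |f(tr e^{\pm J\varphi})|\, \frac{dr}{|r|}$ is independent of $t$ after the scale-invariant substitution $u = tr$, and the $\mathcal{N}^0$-bound on $f$ followed by $w = u^\alpha$ evaluates it to $\frac{C_\alpha \pi}{\alpha}$. Collecting factors and summing the two branches $\pm$ yields the bound in i).

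For ii), I start identically but now retain the full $\mathcal{N}^0$-bound on $g$ in place of $\|g\|_\infty$. Integrating in $t$ against $\frac{dt}{|t|}$, Fubini interchanges the orders of integration, and the iterated scale-invariant substitutions $u = tr$ (inner) and $v = \tau r$ (outer) decouple the double integral into a product of two one-dimensional integrals of the type handled in part i), contributing respectively the factors $\frac{C_\alpha \pi}{\alpha}$ and $\frac{C_\beta \pi}{\beta}$. This delivers the bound in ii), which is manifestly independent of $\tau$.

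For iii), I interpret the inequality as the $L^2(\mathbb{R}, \frac{dt}{|t|}) \to L^2(\mathbb{R}, \frac{d\tau}{|\tau|})$ boundedness of the positive integral operator with kernel $K(\tau, t) := \|f(tT)\, g(\tau T)\|$. Part ii) directly bounds $\sup_\tau \int K(\tau, t)\, \frac{dt}{|t|}$, while the identical argument with the roles of $t, \tau$ and $f, g$ interchanged bounds $\sup_t \int K(\tau, t)\, \frac{d\tau}{|\tau|}$ by the same constant. Schur's test then produces the squared bound \eqref{Eq_fg_estimate}; equivalently, one applies Cauchy-Schwarz on the inner integral with respect to the weight $K(\tau, t)\, \frac{dt}{|t|}$, then integrates in $\tau$ using Fubini together with the second kernel-sum bound. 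There is no deep obstacle here; the main work is the careful bookkeeping of substitutions and orders of integration in parts i) and ii).
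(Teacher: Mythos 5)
Your proposal is correct and follows essentially the same route as the paper: the scaling identity $f(tT)=f_t(T)$ via the resolvent relation \eqref{Eq_fab_approximation_2}, the product rule \eqref{Eq_Product_rule} to pass to $(f_tg_\tau)(T)$, the parametrized integral \eqref{Eq_omega_parametrized} with the resolvent bound \eqref{Eq_SL_estimate} for i), Fubini and the scale-invariant substitutions for ii), and for iii) exactly the Cauchy--Schwarz/Schur-test argument that the paper carries out (splitting the inner integral with respect to the weight $\Vert f(tT)g(\tau T)\Vert\frac{dt}{|t|}$ and using the $t\leftrightarrow\tau$, $f\leftrightarrow g$ symmetry of the constant). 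The only minor point worth noting is that you make the Schur-test structure of iii) and the symmetry of the kernel bound explicit, which the paper leaves implicit.
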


\begin{proof}
Before we start with the actual proof of i), ii), and iii), let us first note that for every fixed $t\in\mathbb{R}\setminus\{0\}$, the estimate \eqref{Eq_fab_approximation_3} shows that the function $s\mapsto f(ts)$ is an element in $\mathcal{N}^0(D_\theta)$. The $\omega$-functional calculus \eqref{Eq_omega_functional_calculus} of this function is then given by
\begin{equation}\label{Eq_fg_estimates_3}
f(t\,\cdot\,)(T)=\frac{1}{2\pi}\int_{\partial D_\varphi\cap\mathbb{C}_J}S_L^{-1}(s,T)ds_Jf(ts)=\frac{1}{2\pi}\int_{\partial D_\varphi\cap\mathbb{C}_J}S_L^{-1}(s,tT)ds_Jf(s)=f(tT),
\end{equation}
where in the second equation we substituted $s\mapsto\frac{s}{t}$ and used the identity \eqref{Eq_fab_approximation_2}. \medskip

i)\;\;First, we use the representation \eqref{Eq_fg_estimates_3} and the product rule \eqref{Eq_Product_rule}, to combine the functions
\begin{equation*}
f(tT)g(\tau T)=f(t\,\cdot\,)(T)g(\tau\,\cdot\,)(T)=\big(f(t\,\cdot\,)g(\tau\,\cdot\,)\big)(T).
\end{equation*}
For some arbitrary $\varphi\in(\omega,\theta)$, we can now use the parametrized formula \eqref{Eq_omega_parametrized} of the $\omega$-functional calculus, to estimate
\begin{align}
\Vert f(tT)g(\tau T)\Vert&=\frac{1}{2\pi}\bigg\Vert\sum\limits_\pm\int_{-\infty}^\infty S_L^{-1}(re^{\pm J\varphi},T)\frac{\mp\sgn(r)e^{\pm J\varphi}}{J}f(tre^{\pm J\varphi})g(\tau re^{\pm J\varphi})dr\bigg\Vert \notag \\
&\leq\frac{C_\varphi}{2\pi}\sum\limits_\pm\int_{-\infty}^\infty|f(tre^{\pm J\varphi})||g(\tau re^{\pm J\varphi})|\frac{dr}{|r|} \label{Eq_fg_estimates_1} \\
&\leq\frac{C_\varphi C_\alpha}{2\pi}\Vert g\Vert_\infty\sum\limits_\pm\int_{-\infty}^\infty\frac{|tr|^\alpha}{1+|tr|^{2\alpha}}\frac{dr}{|r|}=\frac{C_\varphi C_\alpha}{\alpha}\Vert g\Vert_\infty. \notag
\end{align}
Since it is clear by \eqref{Eq_SL_estimate} that $\varphi\mapsto C_\varphi$ is monotone decreasing, we can take the limit $\varphi\rightarrow\theta$ of the right hand side, and obtain the stated inequality i). \medskip

ii)\;\;Using the second line in the inequality \eqref{Eq_fg_estimates_1}, and integrate it over $\int_{-\infty}^\infty\frac{dt}{|t|}$, gives
\begin{align*}
\int_{-\infty}^\infty\Vert f(tT)g(\tau T)\Vert\frac{dt}{|t|}&\leq\frac{C_\varphi}{2\pi}\sum\limits_\pm\int_{-\infty}^\infty\int_{-\infty}^\infty|f(tre^{\pm J\varphi})||g(\tau re^{\pm J\varphi})|\frac{dr}{|r|}\frac{dt}{|t|} \\
&\leq\frac{C_\varphi C_\alpha C_\beta}{2\pi}\sum\limits_\pm\int_{-\infty}^\infty\int_{-\infty}^\infty\frac{|tr|^\alpha|\tau r|^\beta}{(1+|tr|^{2\alpha})(1+|\tau r|^{2\beta})}\frac{dr}{|r|}\frac{dt}{|t|} \\
&=\frac{C_\varphi C_\alpha C_\beta}{\alpha}\int_{-\infty}^\infty\frac{|\tau r|^\beta}{1+|\tau r|^{2\beta}}\frac{dr}{|r|}=\frac{C_\varphi C_\alpha C_\beta\pi}{2\alpha\beta}.
\end{align*}
Taking again the limit $\varphi\rightarrow\theta$, gives the estimate ii). \medskip

iii) Using Hölder's inequality together and the already proven inequality ii), gives
\begin{align*}
\int_{-\infty}^\infty\bigg(\int_{-\infty}^\infty&\Vert f(tT)g(\tau T)\Vert|\Psi(t)|\frac{dt}{|t|}\bigg)^2\frac{d\tau}{|\tau|} \\
&\leq\int_{-\infty}^\infty\bigg(\int_{-\infty}^\infty\Vert f(tT)g(\tau T)\Vert\frac{dt}{|t|}\bigg)\bigg(\int_{-\infty}^\infty\Vert f(tT)g(\tau T)\Vert|\Psi(t)|^2\frac{dt}{|t|}\bigg)\frac{d\tau}{|\tau|} \\
&\leq\frac{C_\theta C_\alpha C_\beta\pi}{2\alpha\beta}\int_{-\infty}^\infty\int_{-\infty}^\infty\Vert f(tT)g(\tau T)\Vert\frac{d\tau}{|\tau|}|\Psi(t)|^2\frac{dt}{|t|} \\
&\leq\Big(\frac{C_\theta C_\alpha C_\beta\pi}{2\alpha\beta}\Big)^2\int_0^\infty|\Psi(t)|^2\frac{dt}{|t|}. \qedhere
\end{align*}
\end{proof}

\begin{prop}\label{prop_f_infty_estimate}
Let $T\in\mathcal{K}(V)$ be injective and bisectorial of angle $\omega\in(0,\frac{\pi}{2})$. Then for every $g\in\mathcal{N}^0(D_\theta)$, $\theta\in(\omega,\frac{\pi}{2})$, there exists a constant $C_g\geq 0$, such that
\begin{equation}\label{Eq_f_infty_estimate}
\int_{-\infty}^\infty\Vert g(tT)f(T)v\Vert^2\frac{dt}{|t|}\leq C_g^2\Vert f\Vert_\infty^2\int_{-\infty}^\infty\Vert g(tT)v\Vert^2\frac{dt}{|t|},\qquad f\in\mathcal{N}^0(D_\theta),\,v\in V.
\end{equation}
\end{prop}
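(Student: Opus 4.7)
The plan is a Calderón-type reproducing formula built from $g$, followed by a Schur-type $L^2$ bound of the shape of Lemma~\ref{lem_fg_estimates}(iii).

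Assuming $g\not\equiv 0$ (the statement is trivial otherwise), I would introduce the auxiliary function
\[ h(s):=\frac{sg(s)}{1+s^2}, \]
which is intrinsic, lies in $\mathcal{N}^0(D_\theta)$, and satisfies
\[ c:=\int_{-\infty}^\infty g(\tau)h(\tau)\,\frac{d\tau}{\tau}=\int_{-\infty}^\infty\frac{g(\tau)^2}{1+\tau^2}\,d\tau>0, \]
because $g$ is real-valued on $\mathbb{R}$ by intrinsicity and cannot vanish identically on $\mathbb{R}$ by slice hyperholomorphicity. Applying Theorem~\ref{thm_fab_approximation} to $gh\in\mathcal{N}^0(D_\theta)$, and using the product rule \eqref{Eq_Product_rule} inside the integrand, then yields the reproducing identity
\[ v = c^{-1}\lim_{\substack{a\to 0^+\\b\to\infty}}\int_{(-b,-a)\cup(a,b)} g(\tau T)\,h(\tau T)\,v\,\frac{d\tau}{\tau},\qquad v\in V. \]

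Since $g$, $f$, and $h$ are all intrinsic, their images under the $\omega$-calculus commute, so repeated application of \eqref{Eq_Product_rule} gives
\[ g(tT)f(T)\,g(\tau T)\,h(\tau T) = K_{t,\tau}(T)\,g(\tau T),\qquad K_{t,\tau}(s):=g(ts)\,f(s)\,h(\tau s)\in\mathcal{N}^0(D_\theta). \]
Pulling the bounded operator $g(tT)f(T)$ into the approximating integral, taking norms, and letting $a\to 0^+,\ b\to\infty$ (monotone convergence on the right) produces
\[ \|g(tT)f(T)v\|\leq c^{-1}\int_{-\infty}^\infty \|K_{t,\tau}(T)\|\,\|g(\tau T)v\|\,\frac{d\tau}{|\tau|}. \]
Repeating the contour estimate of Lemma~\ref{lem_fg_estimates}(i), but pulling $\|f\|_\infty$ out and exploiting the $\mathcal{N}^0$-decay of both $g$ and $h$, yields $\|K_{t,\tau}(T)\|\leq\|f\|_\infty M(t,\tau)$, where
\[ M(t,\tau):=\frac{C_\varphi}{2\pi}\sum_\pm\int_{-\infty}^\infty |g(tre^{\pm J\varphi})|\,|h(\tau re^{\pm J\varphi})|\,\frac{dr}{|r|} \]
is the double-Mellin kernel associated with $g,h$. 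Exactly as in the scaling argument of Lemma~\ref{lem_fg_estimates}(ii), both $M_1:=\sup_t\int M(t,\tau)\,\frac{d\tau}{|\tau|}$ and $M_2:=\sup_\tau\int M(t,\tau)\,\frac{dt}{|t|}$ are finite constants depending only on $g$. A Cauchy--Schwarz bound with weight $M(t,\tau)$, followed by Fubini, then delivers the claim with $C_g^2 = c^{-2}M_1M_2$.

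The main obstacle is the first step: producing a reproducing formula whose right-hand side contains precisely $g(\tau T)v$, rather than some other $\mathcal{N}^0$-function of $T$. The naive choice $g^2$ fails whenever $\int_{\mathbb{R}} g(\tau)^2\,\frac{d\tau}{\tau}$ vanishes by oddness of the measure (for instance when $g$ is even in $\tau$); pairing $g$ with the odd symmetriser $h(s)=sg(s)/(1+s^2)$ turns this into a manifestly positive normalisation while still keeping $g$ on the right of the identity, which is exactly what the Schur argument needs.
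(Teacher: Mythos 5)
Your proposal is correct, and it uses the same core ingredients as the paper's proof: the Calderón reproducing function $gh=eg^2$ with the positive normalisation $c=(eg^2)_{0,\infty}=\int_{-\infty}^\infty g(\tau)^2(1+\tau^2)^{-1}\,d\tau$, Theorem~\ref{thm_fab_approximation} for the approximating identity, and a Schur-type $L^2$ bound derived from the contour estimate of Lemma~\ref{lem_fg_estimates}. Your remark about the oddness of $d\tau/\tau$ forcing an odd symmetriser is precisely the (unstated) reason the paper uses $eg^2$ rather than $g^2$. The genuine difference is in the order of operations. The paper keeps the truncated operator $(eg^2)_{a,b}(T)$ on the left, splits $(eg^2)(\tau T)f(T)$ as $g(\tau T)\cdot e(\tau T)f(T)g(\tau T)$ so that the prepackaged Schur bound of Lemma~\ref{lem_fg_estimates}~iii) (with kernel $\|g(tT)g(\tau T)\|$) applies directly, and only afterwards sends $a\to 0^+$, $b\to\infty$, which requires showing that $t\mapsto g(tT)(eg^2)_{a_n,b_n}(T)f(T)v$ is Cauchy in $L^2(\mathbb{R},dt/|t|)$ and therefore also requires the a-priori assumption $\int_{-\infty}^\infty\|g(tT)v\|^2\,dt/|t|<\infty$. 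You pass to the limit first by pulling the bounded operator $g(tT)f(T)$ through the approximating integral, obtaining the pointwise-in-$t$ bound with the triple kernel $K_{t,\tau}(s)=g(ts)f(s)h(\tau s)$, and then run a Schur test from scratch with the Mellin kernel $M(t,\tau)$. This bypasses the $L^2$-Cauchy-sequence step and the finiteness assumption, at the (minor) cost of re-deriving the Schur estimate rather than quoting Lemma~\ref{lem_fg_estimates}~iii). Both routes are valid and produce constants of the same form.
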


\begin{proof}
Since the opposite cases are trivial, we will assume $g\not\equiv 0$ and
\begin{equation}\label{Eq_f_infty_estimate_3}
\int_{-\infty}^\infty\Vert g(tT)v\Vert^2\frac{dt}{|t|}<\infty.
\end{equation}
Let us fix $0<a\leq b<\infty$, and with $e(s)=\frac{s}{1+s^2}$, we consider the function $(eg^2)_{a,b}$ from \eqref{Eq_fab}. Using the representation \eqref{Eq_fab_representation} of $(eg^2)_{a,b}(T)$, we then estimate
\begin{align*}
\int_{-\infty}^\infty\Vert g(tT)(eg^2)_{a,b}(T)f(T)v&\Vert^2\frac{dt}{|t|}=\int_{-\infty}^\infty\bigg\Vert g(tT)\int_{(-b,-a)\cup(a,b)}(eg^2)(\tau T)\frac{d\tau}{\tau}f(T)v\bigg\Vert^2\frac{dt}{|t|} \\
&\leq\int_{-\infty}^\infty\bigg(\int_{(-b,-a)\cup(a,b)}\Vert g(tT)g(\tau T)\Vert\Vert(eg)(\tau T)f(T)v\Vert\frac{d\tau}{|\tau|}\bigg)^2\frac{dt}{|t|} \\
&\leq\frac{C_\theta^2\Vert f\Vert_\infty^2}{\cos^2(\theta)}\int_{-\infty}^\infty\bigg(\int_{(-b,-a)\cup(a,b)}\Vert g(tT)g(\tau T)\Vert\Vert g(\tau T)v\Vert\frac{d\tau}{|\tau|}\bigg)^2\frac{dt}{|t|},
\end{align*}
where in the last line we commuted $g(\tau T)$ and $f(T)$, which is allowed by \cite[Corollary 3.18]{CMS24}, and used Lemma~\ref{lem_fg_estimates}~i) together with \eqref{Eq_Convergence_lemma_2}, in order to estimates $e(\tau T)f(T)$. Using now also the inequality \eqref{Eq_fg_estimate} with the function $\Psi(\tau)=\Vert g(\tau T)v\Vert\mathds{1}_{(-b,-a)\cup(a,b)}(\tau)$, we furthermore get
\begin{equation}\label{Eq_f_infty_estimate_1}
\int_{-\infty}^\infty\Vert g(tT)(eg^2)_{a,b}(T)f(T)v\Vert^2\frac{dt}{|t|}\leq\Big(\frac{C_\theta^2C_\beta^2\pi}{2\cos(\theta)\beta^2}\Big)^2\Vert f\Vert_\infty^2\int_{(-b,-a)\cup(a,b)}\Vert g(\tau T)v\Vert^2\frac{d\tau}{|\tau|}.
\end{equation}
Let us now choose two monotone sequences $a_n\rightarrow 0^+$, $b_n\rightarrow\infty$. Then by \eqref{Eq_fab_convergence}, there converges
\begin{equation}\label{Eq_f_infty_estimate_2}
\lim\limits_{n\rightarrow\infty}g(tT)(eg^2)_{a_n,b_n}(T)f(T)v=g(tT)(eg^2)_{0,\infty}f(T)v,\qquad t\in\mathbb{R}\setminus\{0\}.
\end{equation}
In order to show that this limit converges also in $L^2(\mathbb{R},\frac{dt}{|t|})$, we analogously to \eqref{Eq_f_infty_estimate_1} show that for any $m\geq n$ there holds the estimate
\begin{align*}
\int_{-\infty}^\infty\Vert g(tT)&\big((eg^2)_{a_n,b_n}(T)-(eg^2)_{a_m,b_m}(T)\big)f(T)v\Vert^2\frac{dt}{|t|} \\
&\leq\Big(\frac{C_\theta^2C_\beta^2\pi}{2\cos(\theta)\beta^2}\Big)^2\Vert f\Vert_\infty^2\int_{(-b_m,-b_n)\cup(-a_n,-a_m)\cup(a_m,a_n)\cup(b_n,b_m)}\Vert g(\tau T)v\Vert^2\frac{d\tau}{|\tau|}.
\end{align*}
Since $\int_{-\infty}^\infty\Vert g(\tau T)v\Vert^2\frac{d\tau}{|\tau|}<\infty$ is finite by the assumption \eqref{Eq_f_infty_estimate_3}, this estimate shows that the sequence of mappings
\begin{equation*}
t\mapsto g(tT)(eg^2)_{a_n,b_n}(T)f(T)v,
\end{equation*}
is a Cauchy-sequence in the space $L^2(\mathbb{R},\frac{dt}{|t|})$. Consequently, this sequence converges in $L^2(\mathbb{R},\frac{dt}{|t|})$ to its pointwise limit \eqref{Eq_f_infty_estimate_2}. In particular there converges the $L^2$-norms
\begin{equation}\label{Eq_f_infty_estimate_4}
\lim\limits_{n\rightarrow\infty}\int_{-\infty}^\infty\Vert g(tT)(eg^2)_{a_n,b_n}(T)f(T)v\Vert^2\frac{dt}{|t|}=(eg^2)_{0,\infty}^2\int_{-\infty}^\infty\Vert g(tT)f(T)v\Vert^2\frac{dt}{|t|}.
\end{equation}
Performing now the limit $n\rightarrow\infty$ in \eqref{Eq_f_infty_estimate_1}, and considering that the limit of the left hand side is given by \eqref{Eq_f_infty_estimate_4}, we get
\begin{align*}
(eg^2)_{0,\infty}^2\int_{-\infty}^\infty\Vert g(tT)f(T)v\Vert^2\frac{dt}{|t|}&\leq\Big(\frac{C_\theta^2C_\beta^2\pi}{2\cos(\theta)\beta^2}\Big)^2\Vert f\Vert_\infty^2\lim\limits_{n\rightarrow\infty}\int_{(-b_n,-a_n)\cup(a_n,b_n)}\Vert g(\tau T)v\Vert^2\frac{d\tau}{|\tau|} \\
&=\Big(\frac{C_\theta^2C_\beta^2\pi}{2\cos(\theta)\beta^2}\Big)^2\Vert f\Vert_\infty^2\int_{-\infty}^\infty\Vert g(\tau T)v\Vert^2\frac{d\tau}{|\tau|}.
\end{align*}
Finally, since we assumed $g\not\equiv 0$ and since $g(t)\in\mathbb{R}$ because $g$ is intrinsic, the value
\begin{equation*}
(eg^2)_{0,\infty}=\int_{-\infty}^\infty\frac{g(t)^2}{1+t^2}dt>0,
\end{equation*}
from \eqref{Eq_fab_approximation_5}, is positive, and we have proven \eqref{Eq_f_infty_estimate} with the constant $C_g=\frac{C_\theta^2C_\beta^2\pi}{2\cos(\theta)\beta^2(eg^2)_{0,\infty}}$.
\end{proof}

\begin{lem}\label{lem_Psieps}
Let $\Psi:\mathbb{R}\rightarrow V$ be a continuous function. Then for every $\varepsilon>0$ there exists a Borel measurable function $\Psi_\varepsilon:\mathbb{R}\rightarrow V$, such that for every $t\in\mathbb{R}$, there is \medskip

\begin{enumerate}
\item[i)] $\Vert\Psi_\varepsilon(t)\Vert=\Vert\Psi(t)\Vert$, \medskip

\item[ii)] $\Vert\Psi(t)\Vert^2\leq(1+\varepsilon)\Sc\langle\Psi(t),\Psi_\varepsilon(t)\rangle$.
\end{enumerate}
\end{lem}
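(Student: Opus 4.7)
Note that the trivial choice $\Psi_\varepsilon := \Psi$ already satisfies both (i) and (ii) with equality, since the norm on the Clifford module is defined by $\Vert v\Vert^2 = \Sc\langle v,v\rangle$ and $\Psi$ is Borel measurable being continuous. The presence of the slack factor $1+\varepsilon$ in (ii), however, suggests that the lemma is really meant to produce a simple (countably valued) $\Psi_\varepsilon$, presumably for use in a subsequent duality-type argument in the quadratic estimate. I therefore sketch a plan producing such a simple function.

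The idea is to preserve the magnitude $\Vert\Psi(t)\Vert$ exactly, and to approximate only the unit direction $\Psi(t)/\Vert\Psi(t)\Vert$ by an element of a countable $\delta$-net on the unit sphere. Fix $\delta:=\varepsilon/(1+\varepsilon)>0$. Since $\Psi$ is continuous and $\mathbb{R}$ is separable, the image $\Psi(\mathbb{R})\subseteq V$ is separable, and hence so is the normalized image $\{\Psi(t)/\Vert\Psi(t)\Vert : \Psi(t)\ne 0\}$. I pick a countable $\delta$-net $\{w_n\}_{n\in\mathbb{N}}$ of this normalized image in the unit sphere, and define
\[
U_n := \{t\in\mathbb{R} : \Psi(t)\ne 0,\ \Vert w_n-\Psi(t)/\Vert\Psi(t)\Vert\Vert\le\delta\}, \qquad B_n := U_n\setminus\bigcup_{k<n}U_k,
\]
which are Borel by continuity of $\Psi$ and of the norm. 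Together with $B_0:=\{t:\Psi(t)=0\}$ these form a Borel partition of $\mathbb{R}$. Setting $\Psi_\varepsilon(t):=\Vert\Psi(t)\Vert w_n$ on $B_n$ for $n\ge 1$ and $\Psi_\varepsilon(t):=0$ on $B_0$ produces a Borel-measurable simple function, and (i) is immediate because $\Vert w_n\Vert=1$.

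For (ii), on each $B_n$ I decompose $w_n = \Psi(t)/\Vert\Psi(t)\Vert + r_n(t)$ with $\Vert r_n(t)\Vert\le\delta$ and compute
\[
\Sc\langle\Psi(t),\Psi_\varepsilon(t)\rangle = \Vert\Psi(t)\Vert\,\Sc\langle\Psi(t),w_n\rangle = \Vert\Psi(t)\Vert^2 + \Vert\Psi(t)\Vert\,\Sc\langle\Psi(t),r_n(t)\rangle.
\]
The auxiliary fact I then invoke is the scalar Cauchy--Schwarz inequality $|\Sc\langle u,v\rangle|\le\Vert u\Vert\Vert v\Vert$, which follows from the identity $\Sc\langle u,v\rangle=\sum_A\langle u_A,v_A\rangle_\mathbb{R}$ (itself a consequence of $\Sc(\overline{e_A}e_B)=\delta_{AB}$) combined with ordinary Cauchy--Schwarz applied to the coefficient sequences. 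This yields $\Sc\langle\Psi(t),\Psi_\varepsilon(t)\rangle\ge(1-\delta)\Vert\Psi(t)\Vert^2$, and the choice $\delta=\varepsilon/(1+\varepsilon)$ rearranges exactly into (ii). No serious obstacle is expected; the only slightly delicate point is the Borel-measurability of the sets $U_n$, which is routine given the joint continuity of $\Psi$ and the norm.
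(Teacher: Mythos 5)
Your argument is correct, and it takes a genuinely different route from the paper's. You also rightly note that, because $\Vert v\Vert^2=\Sc\langle v,v\rangle$ in the Hilbert module, the trivial choice $\Psi_\varepsilon:=\Psi$ already satisfies the lemma as stated; the paper does not record this observation and instead goes straight to a countably-valued construction. For that construction, the paper argues \emph{locally in $t$}: it uses the dual characterization $\Vert\Psi(t)\Vert=\sup_{\Vert v\Vert=1}|\Sc\langle\Psi(t),v\rangle|$, picks for each $t$ a near-maximizing unit vector $v_t$, exploits the continuity of $t'\mapsto|\Sc\langle\Psi(t'),v_t\rangle|/\Vert\Psi(t')\Vert$ to obtain an interval $(t-\delta_t,t+\delta_t)$ where the ratio stays above $1/(1+\varepsilon)$, passes to a countable subcover via Lindel\"of, disjointifies, and finally corrects the sign by multiplying the constructed $\Phi_\varepsilon$ with $\sgn(\Sc\langle\Psi(t),\Phi_\varepsilon(t)\rangle)$. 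You instead argue \emph{globally on the range side}: you normalize $\Psi(t)/\Vert\Psi(t)\Vert$, use separability of the image of the continuous $\Psi$ to pick a countable $\delta$-net $\{w_n\}$ in the unit sphere with $\delta=\varepsilon/(1+\varepsilon)$, disjointify the Borel preimages, and control the error by the scalar Cauchy--Schwarz inequality $|\Sc\langle u,v\rangle|\leq\Vert u\Vert\Vert v\Vert$ (which you correctly derive from $\Sc(\overline{e_A}e_B)=\delta_{AB}$). Both proofs share the same skeleton -- a countable exhaustion, disjointification, and a piecewise-constant choice of direction -- but your version bypasses the sign-correction step entirely, since the $\delta$-net property yields $\Sc\langle\Psi(t),w_n\rangle\geq(1-\delta)\Vert\Psi(t)\Vert>0$ directly, and it replaces the paper's continuity-in-$t'$ argument by a one-line triangle/Cauchy--Schwarz estimate, which is slightly cleaner.
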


\begin{proof}
It is clear that for every $t\in\mathbb{R}$ there is
\begin{equation*}
\Vert\Psi(t)\Vert=\sup_{\Vert v\Vert=1}|\Sc\langle\Psi(t),v\rangle|.
\end{equation*}
Hence there exists some $v_t\in V$, with
\begin{equation}\label{Eq_Psieps_1}
\Vert v_t\Vert=1\qquad\text{and}\qquad\Vert\Psi(t)
\Vert\leq\Big(1+\frac{\varepsilon}{2}\Big)|\Sc\langle\Psi(t),v_t\rangle|.
\end{equation}
Let us consider $I_0:=\big\{t\in\mathbb{R}\;\big|\;\Psi(t)=0\big\}$, and define
\begin{equation}\label{Eq_Psieps_2}
\Psi_\varepsilon(t):=0,\qquad t\in I_0.
\end{equation}
Then it is clear that the assertions i), ii) are satisfied for every $t\in I_0$. \medskip

Next, consider $t\in\mathbb{R}\setminus I_0$. First, we can rewrite the inequality \eqref{Eq_Psieps_1} as
\begin{equation*}
\frac{|\Sc\langle\Psi(t),v_t\rangle|}{\Vert\Psi(t)\Vert}\geq\frac{1}{1+\frac{\varepsilon}{2}}.
\end{equation*}
Since $\Psi$ is continuous, also the mapping $t'\mapsto\frac{|\Sc\langle\Psi(t'),v_t\rangle|}{\Vert\Psi(t')\Vert}$ is continuous on the open set $\mathbb{R}\setminus I_0$. Hence, there exists some $\delta_t>0$, such that
\begin{equation*}
\frac{|\Sc\langle\Psi(t'),v_t\rangle|}{\Vert\Psi(t')\Vert}\geq\frac{1}{1+\varepsilon},\qquad t'\in(t-\delta_t,t+\delta_t).
\end{equation*}
Now, the sets $(t-\delta_t,t+\delta_t)$, $t\in\mathbb{R}\setminus I_0$, form an open cover of $\mathbb{R}\setminus I_0$. Since $\mathbb{R}\setminus I_0$ is a Lindelöf space, there exists a countable subcover
\begin{equation*}
\mathbb{R}\setminus I_0=\bigcup\nolimits_{n\in\mathbb{N}}(t_n-\delta_{t_n},t_n+\delta_{t_n}).
\end{equation*}
Setting now
\begin{equation*}
I_1:=(t_1-\delta_{t_1},t_1+\delta_{t_1})\qquad\text{and}\qquad I_n:=(t_n-\delta_{t_n},t_n+\delta_{t_n})\setminus\bigcup\nolimits_{k=1}^{n-1}I_k,\qquad n\geq 2,
\end{equation*}
we obtain a family of disjoint Borel sets, with the property
\begin{equation*}
I=\biguplus\nolimits_{n=1}^\infty I_n\qquad\text{and}\qquad\Vert\Psi(t)\Vert\leq(1+\varepsilon)|\Sc\langle\Psi(t),v_{t_n}\rangle|,\qquad t\in I_n.
\end{equation*}
If we now define
\begin{equation*}
\Phi_\varepsilon(t):=\Vert\Psi(t)\Vert v_{t_n},\qquad t\in I_n,
\end{equation*}
this function satisfies
\begin{equation*}
\Vert\Phi_\varepsilon(t)\Vert=\Vert\Psi(t)\Vert\qquad\text{and}\qquad\Vert\Psi(t)\Vert^2\leq(1+\varepsilon)|\Sc\langle\Psi(t),\Phi_\varepsilon(t)\rangle|,\qquad t\in I_n.
\end{equation*}
If we now choose
\begin{equation}\label{Eq_Psieps_4}
\Psi_\varepsilon(t):=\sgn\big(\Sc\langle\Psi(t),\Phi_\varepsilon(t)\rangle\big)\Phi_\varepsilon(t),\qquad t\in I_n,
\end{equation}
this function $\Psi_\varepsilon(t)$ satisfies both stated properties i), ii). \medskip

Altogether, since $\mathbb{R}=I_0\uplus\biguplus_{n=1}^\infty I_n$, we have defined in \eqref{Eq_Psieps_2} and \eqref{Eq_Psieps_4} the function $\Psi_\varepsilon$ for every $t\in(0,\infty)$ and the lemma is proven.
\end{proof}

Consider now the group $\mathbb{Z}_2=\{-1,1\}$ equipped with the classical multiplication of real numbers and the discrete topology, i.e. the topology where every subset of $\{-1,1\}$ is open. Then the $\mathbb{Z}$-fold direct product
\begin{equation}\label{Eq_G}
G:=\{-1,1\}^\mathbb{Z}=\big\{(a_k)_{k\in\mathbb{Z}}\;\big|\;a_k\in\{-1,1\}\big\}
\end{equation}
is a compact topological group by Tychonoff's theorem. Moreover, we denote by $\mu$ the normalised Haar measure on $G$, i.e. the unique measure measure $\mu$, defined on the Borel $\sigma$-algebra of $G$, with the properties
\begin{equation}\label{Eq_Haar_measure}
\mu(G)=1\qquad\text{and}\qquad\mu(aB)=\mu(B),\qquad a\in G,\,B\subseteq G\text{ Borel set}.
\end{equation}
The upcoming Lemma~\ref{Haar_measure_orthonormality} now proves that the projectors $p_n:G\rightarrow\{-1,1\}$, defined by
\begin{equation}\label{Eq_Projector}
p_n(a):=a_n,\qquad a=(a_k)_k\in G,
\end{equation}
form an orthonormal system with respect to the Haar measure $\mu$. For the sake of completeness, we will also give a proof of this statement.

\begin{lem}\label{Haar_measure_orthonormality}
Let $G$ be as in \eqref{Eq_G} and $\mu$ the Haar measure from \eqref{Eq_Haar_measure}. Then the projectors \eqref{Eq_Projector} satisfy the orthonormality condition
\begin{equation}\label{Eq_Projector_orthonormality}
\int_Gp_n(a)p_m(a)d\mu(a)=\delta_{n,m},\qquad n,m\in\mathbb{Z}.
\end{equation}
\end{lem}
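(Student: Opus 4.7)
The plan is to split into the two cases $n=m$ and $n\neq m$, with the translation invariance of the Haar measure \eqref{Eq_Haar_measure} doing all the work in the nontrivial case.

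For $n=m$, the statement is immediate: since each coordinate $a_n\in\{-1,1\}$ satisfies $a_n^2=1$, we have $p_n(a)p_n(a)=a_n^2=1$ pointwise on $G$, so the integral equals $\mu(G)=1$.

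For $n\neq m$, I would exploit the abelian group structure of $G$. Pick the element $b=(b_k)_{k\in\mathbb{Z}}\in G$ defined by $b_n=-1$ and $b_k=1$ for every $k\neq n$. Since the group operation on $G$ is coordinatewise multiplication, we have $(ba)_n=-a_n$ while $(ba)_k=a_k$ for $k\neq n$. In particular, because $n\neq m$,
\begin{equation*}
p_n(ba)p_m(ba)=(-a_n)\,a_m=-p_n(a)p_m(a),\qquad a\in G.
\end{equation*}
The translation invariance property $\mu(bB)=\mu(B)$ in \eqref{Eq_Haar_measure} translates, first for indicator functions and then by linearity and monotone convergence, into the integral identity $\int_G f(ba)\,d\mu(a)=\int_G f(a)\,d\mu(a)$ for every bounded Borel function $f:G\to\mathbb{R}$. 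Applying this to $f(a):=p_n(a)p_m(a)$ yields
\begin{equation*}
\int_G p_n(a)p_m(a)\,d\mu(a)=\int_G p_n(ba)p_m(ba)\,d\mu(a)=-\int_G p_n(a)p_m(a)\,d\mu(a),
\end{equation*}
from which the integral must vanish. Combining both cases gives \eqref{Eq_Projector_orthonormality}.

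There is essentially no obstacle here; the only minor point to be careful about is to justify passing from the set-theoretic invariance $\mu(bB)=\mu(B)$ to the integral-theoretic form $\int f(ba)\,d\mu(a)=\int f(a)\,d\mu(a)$, which is the standard measure-theoretic extension (indicators, then simple functions, then bounded Borel functions). Everything else is purely algebraic.
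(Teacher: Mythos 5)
Your proof is correct and rests on the same key idea as the paper's: left-translation invariance of the Haar measure under multiplication by an element of $G$ that flips exactly one coordinate. The paper carries this out at the level of sets (writing the integral as $\mu\{a_n=a_m\}-\mu\{a_n\neq a_m\}$ and exhibiting these two sets as translates of one another), whereas you work directly with the integrand and conclude from $I=-I$; this is the same mechanism in a slightly more streamlined form, and your care in upgrading $\mu(bB)=\mu(B)$ to $\int f(ba)\,d\mu(a)=\int f(a)\,d\mu(a)$ is exactly the right point to flag.
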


\begin{proof}
Let us define for every $n\in\mathbb{N}$ the sets
\begin{equation*}
A_n:=\big\{a\in G\;\big|\;a_n=1\big\}\qquad\text{and}\qquad B_n:=\big\{a\in G\;\big|\;a_n=-1\big\}.
\end{equation*}
With the characteristic functions $\mathds{1}_{A_n}$ and $\mathds{1}_{B_n}$ of these sets, we are able to write the projector $p_n$ as the difference
\begin{equation*}
p_n=\mathds{1}_{A_n}-\mathds{1}_{B_n},
\end{equation*}
and consequently the product of two projectors as
\begin{equation*}
p_np_m=(\mathds{1}_{A_n}-\mathds{1}_{B_n})(\mathds{1}_{A_m}-\mathds{1}_{B_m})=\mathds{1}_{A_n\cap A_m}-\mathds{1}_{A_n\cap B_m}-\mathds{1}_{B_n\cap A_m}+\mathds{1}_{B_n\cap B_m}.
\end{equation*}
This means that the integral \eqref{Eq_Projector_orthonormality} writes as
\begin{align}
\int_Gp_np_md\mu&=\mu(A_n\cap A_m)-\mu(A_n\cap B_m)-\mu(B_n\cap A_m)+\mu(B_n\cap B_m) \notag \\
&=\mu\big((A_n\cap A_m)\uplus(B_n\cap B_m)\big)-\mu\big((A_n\cap B_m)\uplus(B_n\cap A_m)\big) \notag \\
&=\mu\big\{a\in G\;\big|\;a_n=a_m\big\}-\mu\big\{a\in G\;\big|\;a_n\neq a_m\big\}. \label{Eq_Projector_orthonormality_1}
\end{align}
In the case $n=m$, the right hand side of \eqref{Eq_Projector_orthonormality_1} reduces to
\begin{equation*}
\int_Gp_np_md\mu=\mu\big\{a\in G\;\big|\;a_n=a_n\big\}-\mu\big\{a\in G\;\big|\;a_n\neq a_n\big\}=\mu(G)-\mu(\emptyset)=1.
\end{equation*}
In the case $n\neq m$, we choose the special element $e=(e_k)_k\in G$ with
\begin{equation*}
e_k:=\begin{cases} -1, & k=m, \\ 1, & k\neq m, \end{cases},\qquad k\in\mathbb{Z},
\end{equation*}
and rewrite the first set on the right hand side of \eqref{Eq_Projector_orthonormality_1} as
\begin{align*}
\big\{a\in G\;\big|\;a_n=a_m\big\}&=e\big\{e^{-1}a\;\big|\;a\in G,\,a_n=a_m\big\}=e\big\{b\in G\;\big|\;(eb)_n=(eb)_m\big\} \\
&=e\big\{b\in G\;\big|\;b_n=-b_m\big\}=e\big\{b\in G\;\big|\;b_n\neq b_m\big\}.
\end{align*}
With this representation of the set and the property \eqref{Eq_Haar_measure} of the Haar measure, the right hand side of \eqref{Eq_Projector_orthonormality_1} vanishes, as
\begin{align*}
\int_Gp_np_md\mu&=\mu\big(e\big\{b\in G\;\big|\;b_n\neq b_m\big\}\big)-\mu\big\{a\in G\;\big|\;a_n\neq a_m\big\} \\
&=\mu\big\{b\in G\;\big|\;b_n\neq b_m\big\}-\mu\big\{a\in G\;\big|\;a_n\neq a_m\big\}=0. \qedhere
\end{align*}
\end{proof}

Based on the preparatory results above, we are now in a position to prove our main result, which is the characterization of boundedness of the Clifford $H^\infty$- functional calculus.

\begin{thm}\label{thm_Quadratic_estimates}
Let $V$ be a Hilbert module and $T\in\mathcal{K}(V)$ be injective and bisectorial of angle $\omega\in(0,\frac{\pi}{2})$. Then for every $\theta\in(\omega,\frac{\pi}{2})$, the following statements are equivalent: \medskip

\begin{enumerate}
\item[i)] There exists some $c\geq 0$, such that for every $f\in\mathcal{N}^\infty(D_\theta)$ there is $f(T)\in\mathcal{B}(V)$ and
\begin{equation}\label{Eq_Bounded_Hinfty}
\Vert f(T)\Vert\leq c\Vert f\Vert_\infty;
\end{equation}

\item[ii)] For some/each $0\neq g\in\mathcal{N}^0(D_\theta)$, there exist constants $c_g,d_g>0$, such that
\begin{equation}\label{Eq_Quadratic_estimates}
c_g\Vert v\Vert\leq\bigg(\int_{-\infty}^\infty\Vert g(tT)v\Vert^2\frac{dt}{|t|}\bigg)^{\frac{1}{2}}\leq d_g\Vert v\Vert,\qquad v\in V.
\end{equation}

\item[iii)] For some/each $0\neq g\in\mathcal{N}^0(D_\theta)$, there exist constants $\widetilde{c}_g,\widetilde{d}_g\geq 0$, such that
\begin{subequations}\label{Eq_Quadratic_estimates_T_Tstar}
\begin{align}
\bigg(\int_{-\infty}^\infty\Vert g(tT)v\Vert^2\frac{dt}{|t|}\bigg)^{\frac{1}{2}}&\leq\widetilde{c}_g\Vert v\Vert,\qquad v\in V, \label{Eq_Quadratic_estimates_T} \\
\bigg(\int_{-\infty}^\infty\Vert g(tT^*)v\Vert^2\frac{dt}{|t|}\bigg)^{\frac{1}{2}}&\leq\widetilde{d}_g\Vert v\Vert,\qquad v\in V. \label{Eq_Quadratic_estimates_Tstar}
\end{align}
\end{subequations}
\end{enumerate}
\end{thm}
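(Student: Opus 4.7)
The plan is to close the cycle (ii)$\Rightarrow$(i)$\Rightarrow$(iii)$\Rightarrow$(ii). Equivalence of ``for some $g$'' and ``for each $g$'' in (ii) and (iii) is automatic from this cycle, because (i) is formulated without reference to any particular $g$. Throughout, Theorem~\ref{thm_Adjoint_omega_functional_calculus} serves as the bridge between statements about $T$ and $T^*$, providing both $\Vert f(T^*)\Vert=\Vert f(T)\Vert$ and the intertwining $g(tT)^*=g(tT^*)$ for intrinsic $g\in\mathcal{N}^0(D_\theta)\subseteq\mathcal{N}^\infty(D_\theta)$.

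For (ii)$\Rightarrow$(i), I would first treat $f\in\mathcal{N}^0(D_\theta)$: applying the lower bound of (ii) to $f(T)v$, then Proposition~\ref{prop_f_infty_estimate}, and finally the upper bound of (ii), yields the chain
\begin{equation*}
c_g\Vert f(T)v\Vert\leq\bigg(\int_{-\infty}^\infty\Vert g(tT)f(T)v\Vert^2\frac{dt}{|t|}\bigg)^{\frac{1}{2}}\leq C_g\Vert f\Vert_\infty\bigg(\int_{-\infty}^\infty\Vert g(tT)v\Vert^2\frac{dt}{|t|}\bigg)^{\frac{1}{2}}\leq C_gd_g\Vert f\Vert_\infty\Vert v\Vert.
\end{equation*}
To extend to general $f\in\mathcal{N}^\infty(D_\theta)$, I would regularize by $f_n:=\phi_nf\in\mathcal{N}^0(D_\theta)$, where $\phi_n$ is a standard intrinsic regularizer (e.g.\ $\phi_n(s):=\tfrac{(ns)^2}{1+(ns)^2}\cdot\tfrac{n^2}{n^2+s^2}$) with $\phi_n\to 1$ locally uniformly on $D_\theta$ and $\sup_n\Vert\phi_n\Vert_\infty<\infty$. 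Lemma~\ref{lem_Convergence_lemma} then transfers the uniform bound on $\Vert f_n(T)\Vert$ to $f(T)\in\mathcal{B}(V)$ with $\Vert f(T)\Vert\leq c\Vert f\Vert_\infty$.

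For (i)$\Rightarrow$(iii) I would use the classical Rademacher/Khintchine trick, which is precisely what the group $G$ and the orthonormality Lemma~\ref{Haar_measure_orthonormality} have been set up for. Fix $0\neq g\in\mathcal{N}^0(D_\theta)$ and, for $N\in\mathbb{N}$ and $a=(a_k)_{k\in\mathbb{Z}}\in G$, set
\begin{equation*}
g_{a,N}(s):=\sum\nolimits_{k=-N}^{N}a_kg(2^ks),
\end{equation*}
which lies in $\mathcal{N}^\infty(D_\theta)$ with $\Vert g_{a,N}\Vert_\infty\leq C$ uniformly in $a,N$ thanks to the decay $|g(s)|\leq C_\alpha|s|^\alpha/(1+|s|^{2\alpha})$. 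By (i) together with the $\mathbb{R}$-linearity of the $\omega$-functional calculus, $g_{a,N}(T)v=\sum_ka_kg(2^kT)v$ and $\Vert g_{a,N}(T)v\Vert\leq cC\Vert v\Vert$. Squaring, expanding under $\Sc\langle\cdot,\cdot\rangle$ (which is legal because the $a_k$ are real), integrating over $G$ against the Haar measure and invoking \eqref{Eq_Projector_orthonormality} annihilates the off-diagonal terms and produces $\sum_{k=-N}^{N}\Vert g(2^kT)v\Vert^2\leq c^2C^2\Vert v\Vert^2$. Sending $N\to\infty$ and converting the dyadic sum into the continuous integral via $\int_0^\infty\Vert g(tT)v\Vert^2\frac{dt}{t}=\int_1^2\sum_k\Vert h_u(2^kT)v\Vert^2\frac{du}{u}$, where $h_u(r):=g(ur)$, with $\mathcal{N}^0$-decay constants uniform in $u\in[1,2]$, gives \eqref{Eq_Quadratic_estimates_T} on the positive half-line; the negative half-line is identical after replacing $g(s)$ by $g(-s)\in\mathcal{N}^0(D_\theta)$. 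Estimate \eqref{Eq_Quadratic_estimates_Tstar} follows by applying the same argument to $T^*$, which by Theorem~\ref{thm_Adjoint_omega_functional_calculus} is injective and bisectorial of the same angle and inherits (i) via $\Vert f(T^*)\Vert=\Vert f(T)^*\Vert=\Vert f(T)\Vert$.

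For (iii)$\Rightarrow$(ii) only the lower bound of \eqref{Eq_Quadratic_estimates} requires work, since the upper bound is \eqref{Eq_Quadratic_estimates_T}. I would set $h:=g^2\in\mathcal{N}^0(D_\theta)$, noting that $h_{0,\infty}=\int_{-\infty}^\infty g(t)^2\frac{dt}{t}>0$ because $g$ is intrinsic and nontrivial, hence nonzero on a nondegenerate subset of $\mathbb{R}\setminus\{0\}$ by the identity principle. Theorem~\ref{thm_fab_approximation} applied to $h$, combined with the product rule $h(tT)=g(tT)^2$ and the adjoint identity $g(tT)^*=g(tT^*)$ from Theorem~\ref{thm_Adjoint_omega_functional_calculus}, yields for all $v,w\in V$
\begin{equation*}
h_{0,\infty}\Sc\langle w,v\rangle=\lim_{\substack{a\to 0^+\\ b\to\infty}}\int_{(-b,-a)\cup(a,b)}\Sc\langle g(tT^*)w,g(tT)v\rangle\frac{dt}{t}.
\end{equation*}
Two applications of Cauchy--Schwarz (first pointwise in the real inner product $\Sc\langle\cdot,\cdot\rangle$ on $V$, then on $L^2(\mathbb{R},dt/|t|)$) together with \eqref{Eq_Quadratic_estimates_Tstar} produce $h_{0,\infty}|\Sc\langle w,v\rangle|\leq\widetilde{d}_g\Vert w\Vert\bigl(\int_{-\infty}^\infty\Vert g(tT)v\Vert^2dt/|t|\bigr)^{1/2}$, and taking the supremum over $\Vert w\Vert\leq 1$ (recall $\Vert v\Vert=\sup_{\Vert w\Vert\leq 1}|\Sc\langle w,v\rangle|$ in a Clifford Hilbert module) yields the lower estimate with $c_g=h_{0,\infty}/\widetilde{d}_g$.

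The main obstacle is the Rademacher step (i)$\Rightarrow$(iii): one has to verify carefully the uniform $L^\infty$-bound on the $g_{a,N}$, the $\mathbb{R}$-linear pull-through $g_{a,N}(T)v=\sum_ka_kg(2^kT)v$, and the conversion of the resulting discrete dyadic square-function estimate into its continuous counterpart in a way compatible with both signs of $t$ and the noncommutative Clifford inner product. It is essential here that the coefficients $a_k$ are real so that the double sum genuinely collapses to $\sum_k\Vert g(2^kT)v\Vert^2$ under $\Sc\langle\cdot,\cdot\rangle$ via Lemma~\ref{Haar_measure_orthonormality}; Clifford-valued randomization would not immediately deliver the scalar square function.
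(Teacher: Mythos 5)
Your overall strategy — the cycle $(ii)\Rightarrow(i)\Rightarrow(iii)\Rightarrow(ii)$ — is slightly more economical than the paper, which separately proves $(ii)\Rightarrow(iii)$ via Lemma~\ref{lem_Psieps}; as you note, the separate implication is unnecessary once the cycle is closed, and dispensing with it is fine. Your $(ii)\Rightarrow(i)$ and $(i)\Rightarrow(iii)$ follow the paper's route in all essential respects (Proposition~\ref{prop_f_infty_estimate} plus regularization, and the Rademacher/Haar-measure trick with the dyadic reparametrization $t=u2^k$); the choice of a different regularizer $\phi_n$ instead of the paper's $e_\alpha$ and the split $h_u(r):=g(ur)$, $u\in[1,2]$, instead of the paper's direct integration over $(-2,-1)\cup(1,2)$ are cosmetic.

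There is, however, a genuine error in your $(iii)\Rightarrow(ii)$: the claim that $h_{0,\infty}=\int_{-\infty}^\infty g(t)^2\,\tfrac{dt}{t}>0$ ``because $g$ is intrinsic and nontrivial, hence nonzero on a nondegenerate subset of $\mathbb{R}\setminus\{0\}$'' is false. The issue is that the weight $\tfrac{1}{t}$ changes sign on $\mathbb{R}$, so nonnegativity of $g(t)^2$ gives no sign information on the integral; indeed
\[
h_{0,\infty}=\int_0^\infty\frac{g(t)^2-g(-t)^2}{t}\,dt,
\]
which vanishes whenever $|g(-t)|=|g(t)|$ on $\mathbb{R}$. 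A concrete case is the regularizer itself, $g(s)=e(s)=\tfrac{s}{1+s^2}\in\mathcal{N}^0(D_\theta)$, which is odd on $\mathbb{R}$, so $g^2$ is even and $h_{0,\infty}=0$; your lower bound $c_g=h_{0,\infty}/\widetilde d_g$ then degenerates to $0$. (The paper's proof of this step uses the same quantity $(g^2)_{0,\infty}$ and is silent about its positivity, so it is subject to the same objection; but the paper does not attempt the erroneous identity-principle justification.) The remedy, which the paper itself employs in the proof of Proposition~\ref{prop_f_infty_estimate}, is to work with $eg^2$ in place of $g^2$: then $(eg^2)_{0,\infty}=\int_{-\infty}^\infty\tfrac{g(t)^2}{1+t^2}\,dt>0$ for every intrinsic $g\not\equiv 0$, and the extra factor $e(tT)$ in the pairing $\Sc\langle(eg^2)(tT)v,v\rangle=\Sc\langle(eg)(tT)v,g(tT^*)v\rangle$ is harmless because $\Vert e(tT)\Vert$ (and hence $\Vert(eg)(tT)v\Vert\leq\Vert e(tT)\Vert\,\Vert g(tT)v\Vert$) is bounded uniformly in $t$, e.g.\ by Lemma~\ref{lem_fg_estimates}~i). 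With this substitution the duality/Cauchy--Schwarz argument you outline goes through and produces a genuine positive constant $c_g$.
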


\begin{proof}
For the implication $i)\Rightarrow iii)$ let $0\neq g\in\mathcal{N}^0(D_\theta)$ be arbitrary, and $v\in V$. Then for every $n\in\mathbb{N}$, there is
\begin{align}
\int_{(-2^n,-2^{-n})\cup(2^{-n},2^n)}\Vert g(tT)v\Vert^2\frac{dt}{|t|}&=\sum_{k=-n}^{n-1}\int_{(-2^{k+1},-2^k)\cup(2^k,-2^{k+1})}\Vert g(tT)v\Vert^2\frac{dt}{|t|} \notag \\
&=\sum_{k=-n}^{n-1}\int_{(-2,-1)\cup(1,2)}\Vert g(t2^kT)v\Vert^2\frac{dt}{|t|}. \label{Eq_Quadratic_estimates_2}
\end{align}
Using the group $G$ in \eqref{Eq_G}, the Haar measure \eqref{Eq_Haar_measure}, and the projectors \eqref{Eq_Projector_orthonormality}, we can use the orthonormality \eqref{Eq_Projector_orthonormality}, to write the sum on the right hand side of \eqref{Eq_Quadratic_estimates_2} as
\begin{align*}
\sum\limits_{k=-n}^{n-1}\Vert g(t2^kT)v\Vert^2&=\sum\limits_{k,l=-n}^{n-1}\int_Gp_k(a)p_l(a)d\mu(a)\Sc\big\langle g(t2^kT)v,g(t2^lT)v\big\rangle \\
&=\int_G\Sc\bigg\langle\sum\limits_{k=-n}^{n-1}p_k(a)g(t2^kT)v,\sum\limits_{l=-n}^{n-1}p_l(a)g(t2^lT)v\bigg\rangle d\mu(a) \\
&=\int_G\bigg\Vert\sum\limits_{k=-n}^{n-1}p_k(a)g(t2^kT)v\bigg\Vert^2d\mu(a) \\
&=\int_G\bigg\Vert\bigg(\sum\limits_{k=-n}^{n-1}p_k(a)g(t2^k\,\cdot\,)\bigg)(T)v\bigg\Vert^2d\mu(a),
\end{align*}
where in the last equation we used the identity $g(t2^kT)=g(t2^k\,\cdot\,)(T)$ from \eqref{Eq_fg_estimates_3} as well as the linearity of the $\omega$-functional calculus. In this form we can now use the assumed boundedness of the $H^\infty$-functional calculus \eqref{Eq_Bounded_Hinfty} with the function $f(s)=\sum_{k=-n}^{n-1}p_k(a)g(t2^ks)$, to estimate
\begin{equation}\label{Eq_Quadratic_estimates_1}
\sum\limits_{k=-n}^{n-1}\Vert g(t2^kT)v\Vert^2\leq c^2\int_G\bigg\Vert\sum\limits_{k=-n}^{n-1}p_k(a)g(t2^k\,\cdot\,)\bigg\Vert_\infty^2d\mu(a)\Vert v\Vert^2.
\end{equation}
For every $a\in G$, $t\in\mathbb{R}\setminus\{0\}$ and $s\in D_\theta$, we can now estimate the $\Vert\cdot\Vert_\infty$-norm in \eqref{Eq_Quadratic_estimates_1} by
\begin{equation*}
\bigg|\sum\limits_{k=-n}^{n-1}p_k(a)g(t2^ks)\bigg|\leq\sum\limits_{k=-n}^{n-1}|g(t2^ks)|\leq C_\beta\sum\limits_{k=-\infty}^\infty\frac{|t2^ks|^\beta}{1+|t2^ks|^{2\beta}}.
\end{equation*}
Choosing now $k_0\in\mathbb{Z}$ such that $|ts|\in[2^{k_0},2^{k_0+1}]$, we can split up the sum to further estimate
\begin{align*}
\bigg|\sum\limits_{k=-n}^{n-1}p_k(a)g(t2^ks)\bigg|&\leq C_\beta\sum\limits_{k=-\infty}^{-k_0-1}|t2^ks|^\beta+C_\beta\sum\limits_{k=-k_0}^\infty\frac{1}{|t2^ks|^\beta} \\
&\leq C_\beta\sum\limits_{k=-\infty}^{-k_0-1}(2^{k+k_0+1})^\beta+C_\beta\sum\limits_{k=-k_0}^\infty\frac{1}{(2^{k+k_0})^\beta}=\frac{2C_\beta}{1-2^{-\beta}}.
\end{align*}
Since this estimate is true for every $s\in D_\theta$, and since $\mu(G)=1$ by \eqref{Eq_Haar_measure}, it turns \eqref{Eq_Quadratic_estimates_1} into
\begin{equation*}
\sum\limits_{k=-n}^{n-1}\Vert g(t2^kT)v\Vert^2\leq c^2\int_G\frac{4C_\beta^2}{(1-2^{-\beta})^2}d\mu(a)\Vert v\Vert^2=\frac{4c^2C_\beta^2}{(1-2^{-\beta})^2}\Vert v\Vert^2.
\end{equation*}
Plugging this now into \eqref{Eq_Quadratic_estimates_2}, gives
\begin{equation*}
\int_{(-2^n,-2^{-n})\cup(2^{-n},2^n)}\Vert g(tT)v\Vert^2\frac{dt}{|t|}\leq\int_{(-2,-1)\cup(1,2)}\frac{4c^2C_\beta^2}{(1-2^{-\beta})^2}\Vert v\Vert^2\frac{dt}{|t|}=\frac{8\ln(2)c^2C_\beta^2}{(1-2^{-\beta})^2}\Vert v\Vert^2.
\end{equation*}
Since this is true for every $n\in\mathbb{N}$, we can take the limit $n\rightarrow\infty$ and obtain the first inequality \eqref{Eq_Quadratic_estimates_T}, namely
\begin{equation}\label{Eq_Quadratic_estimates_3}
\int_{-\infty}^\infty\Vert g(tT)v\Vert^2\frac{dt}{|t|}\leq\frac{8\ln(2)c^2C_\beta^2}{(1-2^{-\beta})^2}\Vert v\Vert^2.
\end{equation}
For the second inequality \eqref{Eq_Quadratic_estimates_T_Tstar}, we note that by Theorem~\ref{thm_Adjoint_omega_functional_calculus} and \eqref{Eq_Bounded_Hinfty}, there is
\begin{equation*}
\Vert g(T^*)\Vert=\Vert g(T)^*\Vert=\Vert g(T)\Vert\leq c\Vert g\Vert_\infty.
\end{equation*}
This means that the assumption \eqref{Eq_Bounded_Hinfty} is also satisfied for the adjoint operator $T^*$. In the same way as for $T$ in \eqref{Eq_Quadratic_estimates_3}, we can now also derive the estimate
\begin{equation*}
\int_{-\infty}^\infty\Vert g(tT^*)v\Vert^2\frac{dt}{|t|}\leq\frac{8\ln(2)c^2C_\beta}{(1-2^{-\beta})^2}\Vert v\Vert^2.
\end{equation*}
Next, let us prove the implication $\text{\grqq}ii)\Rightarrow i)\text{\grqq}$, and let $0\neq g\in\mathcal{N}^0(D_\theta)$ be the one function for which \eqref{Eq_Quadratic_estimates} is satisfied. In a \textit{first step} we will verify \eqref{Eq_Bounded_Hinfty} for $f\in\mathcal{N}^0(D_\theta)$. We know from Proposition~\ref{prop_f_infty_estimate}, that there exists some constant $C_g\geq 0$, such that
\begin{equation*}
\int_{-\infty}^\infty\Vert g(tT)f(T)v\Vert^2\frac{dt}{|t|}\leq C_g^2\Vert f\Vert_\infty^2\int_{-\infty}^\infty\Vert g(tT)v\Vert^2\frac{dt}{|t|}.
\end{equation*}
Thus, with the given inequality \eqref{Eq_Quadratic_estimates} for the vector $f(T)v$, we then get
\begin{equation*}
c_g^2\Vert f(T)v\Vert^2\leq\int_{-\infty}^\infty\Vert g(tT)f(T)v\Vert^2\frac{dt}{|t|}\leq C_g^2\Vert f\Vert_\infty^2\int_{-\infty}^\infty\Vert g(tT)v\Vert^2\frac{dt}{|t|}\leq C_g^2d_g^2\Vert f\Vert_\infty^2\Vert v\Vert^2.
\end{equation*}
Since this estimate is true for every $v\in V$, we conclude
\begin{equation}\label{Eq_Quadratic_estimates_4}
\Vert f(T)\Vert\leq\frac{C_gd_g}{c_g}\Vert f\Vert_\infty,\qquad f\in\mathcal{N}^0(D_\theta).
\end{equation}
In the \textit{second step} we will extend \eqref{Eq_Quadratic_estimates_4} to all $f\in\mathcal{N}^\infty(D_\theta)$. To do so, let us consider for every $0<\alpha\leq 1$ the function
\begin{equation*}
e_\alpha(s):=\begin{cases} \frac{s^\alpha}{(1+s^2)^\alpha}, & \text{if }\Sc(s)>0, \\ \frac{(-s)^\alpha}{(1+s^2)^\alpha}, & \text{if }\Sc(s)<0, \end{cases} \qquad s\in D_\theta.
\end{equation*}
Then there clearly is $e_\alpha\in\mathcal{N}^0(D_\theta)$, and since $f$ is bounded, also $e_\alpha f\in\mathcal{N}^0(D_\theta)$. This means, that from the already proven inequality \eqref{Eq_Quadratic_estimates_4}, we get
\begin{equation}\label{Eq_Quadratic_estimates_5}
\Vert(e_\alpha f)(T)\Vert\leq\frac{C_gd_g}{c_g}\Vert e_\alpha f\Vert_\infty\leq\frac{C_gd_g}{c_g\cos(\theta)}\Vert f\Vert_\infty,
\end{equation}
where in the second inequality, we used \eqref{Eq_Convergence_lemma_2} to obtain the uniform upper bound
\begin{equation*}
|e_\alpha(s)f(s)|\leq\frac{\Vert f\Vert_\infty|s|^\alpha}{|1+s^2|^{\alpha}}\leq\frac{\Vert f\Vert_\infty|s|^\alpha}{\cos^\alpha(\theta)(1+|s|^2)^\alpha}\leq\frac{\Vert f\Vert_\infty}{2^\alpha\cos^\alpha(\theta)}\leq\frac{\Vert f\Vert_\infty}{\cos(\theta)},\qquad s\in D_\theta.
\end{equation*}
Next, it follows from a convergence result due to Vitali \cite[Proposition 5.1.1]{Haase}, that there converges $\lim_{\alpha\rightarrow 0^+}e_\alpha(s)=1$, uniformly on compact subsets of $D_\theta$. This means that from Lemma~\ref{lem_Convergence_lemma}, we then conclude $f(T)\in\mathcal{B}(V)$, as well as the strong convergence
\begin{equation*}
\lim\limits_{\alpha\rightarrow 0^+}(e_\alpha f)(T)v=f(T)v,\qquad v\in V.
\end{equation*}
In particular, using this convergence together with \eqref{Eq_Quadratic_estimates_5}, also $f(T)$ satisfies the norm estimate
\begin{equation*}
\Vert f(T)v\Vert=\lim\limits_{\alpha\rightarrow 0^+}\Vert(e_\alpha f)(T)v\Vert\leq\frac{C_gd_g}{c_g\cos(\theta)}\Vert f\Vert_\infty.
\end{equation*}
For the implication $\text{\grqq}ii)\Rightarrow iii)\text{\grqq}$ let us fix $0\neq g\in\mathcal{N}^0(D_\theta)$ and consider some arbitrary $v\in V$. Then we know from Lemma~\ref{lem_Psieps} with $\Psi(t)=g(tT^*)v$, that for every $\varepsilon>0$ there exists some $\Psi_\varepsilon:\mathbb{R}\rightarrow V$, with
\begin{equation*}
\Vert\Psi_\varepsilon(t)\Vert=\Vert g(tT^*)v\Vert\qquad\text{and}\qquad\Vert g(tT^*)v\Vert^2\leq(1+\varepsilon)\Sc\big\langle g(tT^*)v,\Psi_\varepsilon(t)\big\rangle,\qquad t\in\mathbb{R}.
\end{equation*}
From the assumed lower bound in \eqref{Eq_Quadratic_estimates}, there then follows the estimate
\begin{align}
\int_{-\infty}^\infty\Vert g(tT^*)v\Vert^2\frac{dt}{|t|}&\leq(1+\varepsilon)\int_{-\infty}^\infty\Sc\big\langle g(tT^*)v,\Psi_\varepsilon(t)\big\rangle\frac{dt}{|t|}=(1+\varepsilon)\int_{-\infty}^\infty\Sc\big\langle v,g(tT)\Psi_\varepsilon(t)\big\rangle\frac{dt}{|t|} \notag \\
&=(1+\varepsilon)\Sc\bigg\langle v,\underbrace{\int_{-\infty}^\infty g(tT)\Psi_\varepsilon(t)\frac{dt}{|t|}}_{=:w}\bigg\rangle\leq(1+\varepsilon)\Vert v\Vert\Vert w\Vert. \label{Eq_Quadratic_estimates_7}
\end{align}
Moreover, for this vector $w$, we can now apply the inequality \eqref{Eq_fg_estimate}, to get an estimate between the two quadratic integrals
\begin{align}
\int_{-\infty}^\infty\Vert g(\tau T)w\Vert^2\frac{d\tau}{|\tau|}&=\int_{-\infty}^\infty\bigg\Vert g(\tau T)\int_{-\infty}^\infty g(tT)\Psi_\varepsilon(t)\frac{dt}{|t|}\bigg\Vert^2\frac{d\tau}{|\tau|} \notag \\
&\leq\Big(\frac{C_\theta C_\beta^2\pi}{2\beta^2}\Big)^2\int_{-\infty}^\infty\Vert\Psi_\varepsilon(t)\Vert^2\frac{dt}{|t|} \notag \\
&=\Big(\frac{C_\theta C_\beta^2\pi}{2\beta^2}\Big)^2\int_{-\infty}^\infty\Vert g(tT^*)v\Vert^2\frac{dt}{|t|}. \label{Eq_Quadratic_estimates_8}
\end{align}
Applying now first the estimate \eqref{Eq_Quadratic_estimates_8} and then \eqref{Eq_Quadratic_estimates_7}, gives
\begin{align*}
\bigg(\int_{-\infty}^\infty\Vert g(\tau T)w\Vert^2\frac{d\tau}{|\tau|}\bigg)^{\frac{1}{2}}\bigg(\int_{-\infty}^\infty\Vert g(tT^*)v\Vert^2\frac{dt}{|t|}\bigg)^{\frac{1}{2}}&\leq\frac{C_\theta C_\beta^2\pi}{2\beta^2}\int_{-\infty}^\infty\Vert g(tT^*)v\Vert^2\frac{dt}{|t|} \\
&\leq\frac{(1+\varepsilon)C_\theta C_\beta^2\pi}{2\beta^2}\Vert v\Vert\Vert w\Vert.
\end{align*}
Using now also the lower bound of the quadratic estimate for $w$ in \eqref{Eq_Quadratic_estimates_T}, then gives
\begin{equation*}
\bigg(\int_{-\infty}^\infty\Vert g(tT^*)v\Vert^2\frac{dt}{|t|}\bigg)^{\frac{1}{2}}\leq\frac{(1+\varepsilon)C_\theta C_\beta^2\pi}{2\beta^2c_g}\Vert v\Vert.
\end{equation*}
Finally, for the implication $\text{\grqq}iii)\Rightarrow ii)\text{\grqq}$ let us fix $0\neq g\in\mathcal{N}^0(D_\theta)$ and consider some arbitrary $v\in V$. It then follows from Theorem~\ref{thm_fab_approximation}, as well as the representation \eqref{Eq_fab_approximation_4} of $g_{a,b}^2(T)$, that
\begin{equation*}
(g^2)_{0,\infty}v=\lim\limits_{\substack{a\rightarrow 0^+ \\ b\rightarrow\infty}}g_{a,b}^2(T)v=\lim\limits_{\substack{a\rightarrow 0^+ \\ b\rightarrow\infty}}\int_{(-b,-a)\cup(a,b)}g^2(tT)v\frac{dt}{t}.
\end{equation*}
Taking now the scalar part of the inner product of this equation with the vector $v$, then gives
\begin{align*}
(g^2)_{0,\infty}\Vert v\Vert^2&=\lim\limits_{\substack{a\rightarrow 0^+ \\ b\rightarrow\infty}}\int_{(-b,-a)\cup(a,b)}\Sc\langle g^2(tT)v,v\rangle\frac{dt}{t} \\
&=\lim\limits_{\substack{a\rightarrow 0^+ \\ b\rightarrow\infty}}\int_{(-b,-a)\cup(a,b)}\Sc\langle g(tT)v,g(tT^*)v\rangle\frac{dt}{t} \\
&\leq\int_{-\infty}^\infty\Vert g(tT)v\Vert\Vert g(tT^*)v\Vert\frac{dt}{|t|} \\
&\leq\bigg(\int_{-\infty}^\infty\Vert g(tT)v\Vert^2\frac{dt}{|t|}\bigg)^{\frac{1}{2}}\bigg(\int_{-\infty}^\infty\Vert g(tT^*)v\Vert^2\frac{dt}{|t|}\bigg)^{\frac{1}{2}},
\end{align*}
where in the last line we used Hölder's inequality. In this form we can now use the assumed quadratic estimate \eqref{Eq_Quadratic_estimates_Tstar}, to get the stated lower bound in \eqref{Eq_Quadratic_estimates}
\begin{equation*}
\frac{(g^2)_{0,\infty}}{\widetilde{d}_g}\Vert v\Vert\leq\bigg(\int_{-\infty}^\infty\Vert g(tT)v\Vert^2\frac{dt}{|t|}\bigg)^{\frac{1}{2}}. \qedhere
\end{equation*}
\end{proof}

\section{Concluding remarks on applications and research directions}

Unlike complex holomorphic function theory, the noncommutative setting of the Clifford algebra allows multiple notions of hyperholomorphicity for vector fields. Consequently, different spectral theories emerge, each based on the concept of spectrum that relies on distinct Cauchy kernels. The spectral theory based on the $S$-spectrum was inspired by quaternionic quantum mechanics (see \cite{BF}), is associated with slice hyperholomorphicity, and began its development in 2006. A comprehensive introduction can be found in \cite{CGK}, with further explorations done in \cite{ACS2016,AlpayColSab2020,ColomboSabadiniStruppa2011}. Applications on fractional powers of operators are investigated in \cite{CGdiffusion2018,CG18,FJBOOK,JONAMEM,JONADIRECT} and some results from classical interpolation theory, see \cite{BERG_INTER,BRUDNYI_INTER,Ale_INTER,TRIEBEL}, have been recently extended into this setting \cite{COLSCH}. In order to fully appreciate the spectral theory on the $S$-spectrum, we recall that it applies to sets of noncommuting operators, for example with the identification $(T_1,...,T_{2^n})\leftrightarrow T=\sum_AT_Ae_A$ it can be seen as a theory for several operators. Extending complex spectral theory to vector operators broadens its applicability to various areas of research fields such as: \medskip

\textit{Quaternionic formulation of quantum mechanics.} The interest in spectral theory for quaternionic operators is motivated by the 1936 paper \cite{BF} on the logic of quantum mechanics by G. Birkhoff, J. von Neumann. There, the authors showed that Schrödinger equation can be written basically in the complex or in the quaternionic setting, see also the book of Adler \cite{adler}. \medskip

\textit{Vector analysis.} The gradient operator with nonconstant coefficients in $n$ dimensions, represents various physical laws, such as Fourier's law for heat propagation and Fick's law for mass transfer diffusion. This can be expressed as
\begin{equation*}
T=\sum_{i=1}^ne_ia_i(x)\partial_{x_i},\qquad x\in\Omega,
\end{equation*}
and is associated with various boundary conditions, as discussed in \cite{CMS24}. Here, $e_1,\dots,e_n$ represent the imaginary units of the Clifford algebra $\mathbb{R}_n$, and the coefficients $a_1,\dots,a_n$ are assumed to belong to $C^1(\overline{\Omega})$ and satisfy appropriate bounds. \medskip

\textit{Differential geometry.} Let $(M,g)$ be a Riemannian manifold of dimension $n$, and $U$ a coordinate neighbourhood of $M$. In this neighbourhood, we can find an orthonormal frame of vector fields $E_1,\dots,E_n$, such that the Dirac operator on $(U,g)$ can be expressed as the $\mathcal{C}^\infty(U,\mathcal{H})$ differential operator
\begin{equation}\label{Eq_Dirac_manifold}
\mathcal{D}=\sum_{i=1}^ne_i\nabla_{E_i}^\tau,
\end{equation}
where $\nabla_{E_i}^\tau$ denotes the covariant derivative with respect to the vector field $E_i$. For more details see the book \cite{DiracHarm}. As a special case the paper \cite{DIRACHYPSPHE} considers the Dirac operator in hyperbolic and spherical spaces, where \eqref{Eq_Dirac_manifold} takes the explicit forms
\begin{equation*}
\mathcal{D}_H=x_n\sum_{i=1}^ne_i\partial_{x_i}-\frac{n-1}{2}e_n\qquad\text{and}\qquad\mathcal{D}_S=(1+|x|^2)\sum_{i=1}^ne_i\partial_{x_i}-nx,
\end{equation*}
where for $\mathcal{D}_H$ we pick $(x_1,...,x_{n-1},x_n)\in\mathbb{R}^{n-1}\times\mathbb{R}^+$ and $(x_1,...,x_n)\in\mathbb{R}^n$ for $\mathcal{D}_S$, respectively. \medskip

\textit{Hypercomplex analysis.} The well-known Dirac operator and its conjugate
\begin{equation*}
D=\partial_{x_0}+\sum_{i=1}^ne_i\partial_{x_i}\qquad\text{and}\qquad\overline{D}=\partial_{x_0}-\sum_{i=1}^ne_i\partial_{x_i},
\end{equation*}
are widely investigated in \cite{DSS} as well as in \cite{DiracHarm}. Slice hyperholomorphic functions can also be viewed as functions in the kernel of the global operator
\begin{equation*}
G=|\Im(x)|^2\partial_{x_0}+\Im(x)\sum_{i=1}^nx_i\partial_{x_i},
\end{equation*}
introduced in \cite{6Global}, where $\Im(x)=x_1e_1+\dots+x_ne_n$. For integers $\alpha,\beta,m$, the operators of the Dirac fine structure on the $S$-spectrum are defined as
\begin{equation*}
T_{\alpha,m}=D^\alpha(D\overline{D})^m\qquad\text{and}\qquad\widetilde{T}_{\beta,m}=\overline{D}^\beta(D\overline{D})^m.
\end{equation*}
The fine structures on the $S$-spectrum constitute a set of function spaces and their related functional calculi. These structures have been introduced and studied in recent works \cite{polypolyFS,CDPS1,Fivedim,Polyf1,Polyf2}, while their respective $H^\infty$-versions are investigated in \cite{MILANJPETER,MPS23}. \medskip

The $H^\infty$-functional calculi also have a broader context in the recently introduced fine structures on the $S$-spectrum. These are function spaces of nonholomorphic functions derived from the Fueter-Sce extension theorem, which connects slice hyperholomorphic and axially monogenic functions, the two main notions of holomorphicity in the Clifford setting. The connection is established through powers of the Laplace operator in a higher dimension, see \cite{Fueter,TaoQian1,Sce} and also the translation \cite{ColSabStrupSce}. \medskip

It is important to highlight that in the hypercomplex setting there exists another spectral theory based on the monogenic spectrum, which was initiated by Jefferies, McIntosh and J. Picton-Warlow in \cite{JM}. This theory is founded on monogenic functions, which are functions in the kernel of the Dirac operator, and their associated Cauchy formula (see \cite{DSS} for a comprehensive treatment). The monogenic spectral theory, based on monogenic functions \cite{DSS}, has been extensively developed and is well described in the seminal books \cite{JBOOK} and \cite{TAOBOOK}. These texts provide an in-depth exploration of the theory, with particular emphasis on the $H^\infty$-functional calculus in the monogenic setting. \medskip

\section*{Declarations and statements}

\textbf{Data availability}. The research in this paper does not imply use of data. \medskip

\textbf{Conflict of interest}. The authors declare that there is no conflict of interest.

\end{document}